   \newtheorem{theo}{Theorem}[section]
	\newtheorem{prop}[theo]{Proposition}
   \newtheorem{lemm}[theo]{Lemma}
   	\newtheorem{assumption}[theo]{Assumptions}
	\theoremstyle{remark}
   \def \u{\bar{u}}
	\def \n{\bar{n}}
   \def \div{ \nabla \cdot}
	\def\norm#1#2{\|#1\|_{L^#2(\Omega)}}
   \newcommand{\dx}{\,{\rm d}x}
   \newcommand{\dy}[1]{\,{\rm d}#1}
	\newcommand\R{\mathbb{R}}
\numberwithin{equation}{section}
\numberwithin{theo}{section}
\begin{document}

\title[Model of chemotactic E.~coli colonies]{ Mathematical treatment of  PDE model \\
 of  chemotactic {\it E.~coli} colonies}

\author[R. Celi\'nski]{Rafa\l\ Celi\'nski}
\address[R. Celi\'nski]{
 Instytut Matematyczny, Uniwersytet Wroc\l awski,
 pl. Grunwaldzki 2/4, 50-384 Wroc\-\l aw, POLAND}
\email{Rafal.Celinski@math.uni.wroc.pl}

\author[D. Hilhorst]{Danielle Hilhorst}
\address[D. Hilhorst]{
CNRS et Laboratoire de Math\' ematiques, Universit\' e de Paris-Sud,
B\^at.~307, 91405 Orsay Cedex, FRANCE}
\email{Danielle.Hilhorst@math.u-psud.fr}

\author[G. Karch]{Grzegorz Karch}
\address[G. Karch]{
 Instytut Matematyczny, Uniwersytet Wroc\l awski,
 pl. Grunwaldzki 2/4, 50-384 Wroc\-\l aw, POLAND}
\email{Grzegorz.Karch@math.uni.wroc.pl}
\urladdr{http://www.math.uni.wroc.pl/~karch}

\author[M. Mimura]{Masayasu Mimura}
\address[M. Mimura]{
 Graduate School of Integrated Sciences for Life, Hiroshima University, 1-3-1 Kagamiyama, Higashi-Hiroshima City,
Hiroshima 739-8526, JAPAN}
\email{mimura.masayasu@gmail.com}
\urladdr{http://home.mims.meiji.ac.jp/~mimura}

\author[P. Roux]{Pierre Roux}
\address[P. Roux]{
	Laboratoire de Math\' ematiques, Universit\' e de Paris-Sud,
	B\^at.~307, 91405 Orsay Cedex, FRANCE}
\email{pierre.roux@math.u-psud.fr}

\date{\today}

\begin{abstract}{
We consider an initial-boundary value problem for 
reaction-diffusion equations coupled with the Keller-Segel system from the chemotaxis theory which 
describe a formation of
colony patterns of bacteria {\it Escherichia coli}. 
The main goal of this work is to show that  global-in-time solutions of this model converge towards stationary solutions depending on initial conditions.}
%
%
%
%
\end{abstract}

\keywords{chemotaxis, aggregation, reaction-diffusion equations, convergence towards steady states, blowup of solutions}
\bigskip

\subjclass[2000]{ 35B36; 35B40; 35K20; 35K55; 35K57 }

\maketitle

\section{Introduction}

Budrene and Berg \cite{BB91,BB91_2} performed experiments showing  that chemotactic strains of bacterias {\it E.~coli}, inoculated in semi-solid agar, form stable and remarkably complex but geometrically regulated spatial patterns such as swarm rings, radial spots, interdigitated arrays of spots and rather complex chevron-like patterns as shown in Fig.~1. They suggested that such colonial patterns depend on an initial concentration of a nutrient (substrate) which determines how long  multicellular aggregate structures remain active. They expected that four elements such as: 
the substrate consumption, the cell proliferation, the excretion of attractant, and the chemotactic motility, when they are suitably combined, can generate complex spatial structures in a self-organized way and that a specialized and more complex morphogenetic program is not required. However, this hypothesis does not necessarily imply that such complex patterns occur as a consequence of self-organization.
\begin{figure}
\includegraphics[scale=0.7]{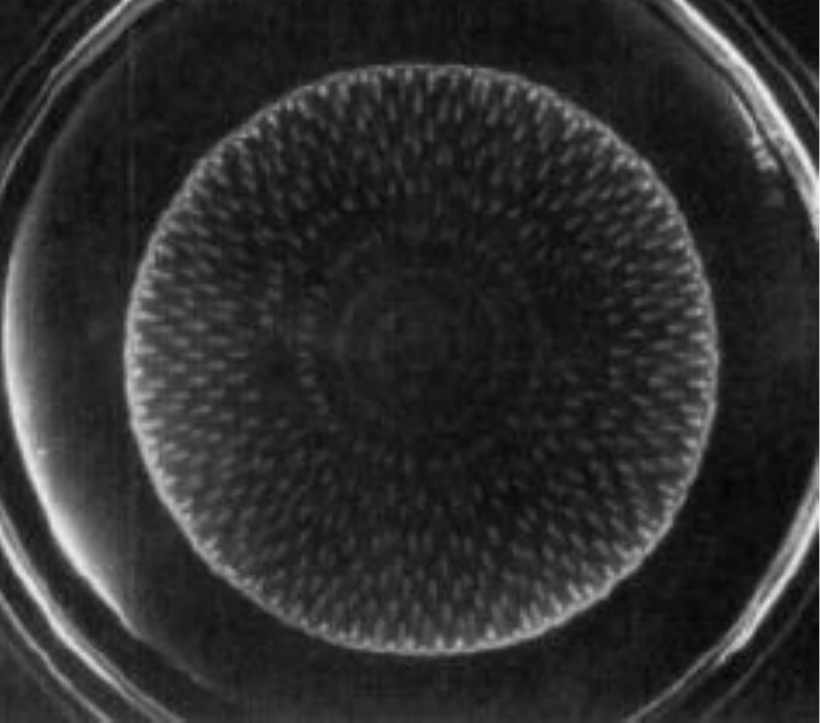}
\caption{Chevron-like pattern in experiment \cite{BB91, BB91_2}}
(by courtesy of Budrene and Berg)
\label{fig:exp}
\end{figure}

It is a challenging problem in the field of mathematical biology to understand self-organization and, in particular, the influence of a chemotaxis on the occurrence of colonial patterns. 
The  mathematical model of chemotaxis was introduced by Keller
and Segel \cite{KS70,KS71} which simplified version consists of the following system of partial differential equations 
\begin{equation}\label{KS0}
\begin{split}
&u_t= d_u\Delta u-\div(u\nabla c )\\
&c_t=d_c\Delta c+\alpha u-\beta c,
\end{split}
\end{equation}
where $u=u(x,t)$ denotes the density of cells and $c=c(x,t)$ is a concentration of chemoattractant.
Since the seminal papers  by Keller and Segel, a great number of chemotaxis PDE models have been  introduced and studied several papers.  
Here, we quote only the monograph \cite{yagi}, the  reviews \cite{H,BBTW}
as well as the papers \cite{Cao, Corrias2, HW, JL92, N01, TW, W2, W3, W4,W} with mathematical results related to those in this work.

In order to model a pattern formation in bacteria colonies, 
 Mimura and Tsujikawa \cite{MT96} considered  more general model based on the chemotaxis and growth of bacteria:
\begin{equation}\label{i1}
\begin{split}
&u_t= d_u\Delta u-\div(u\nabla\chi(c))+f(u)\\
&c_t=d_c\Delta c+\alpha u-\beta c.
\end{split}
\end{equation}
Here, 
 $\chi=\chi(c)$ is the sensitivity  function of chemotaxis and $f(u)$ is a growth function with an Allee effect. 
The authors of \cite{MT96} studied the influence of the form of $\chi(c)$ on the occurrence of chemotaxis-induced instability but  model \eqref{i1} {could not generate patterns} similar to those  observed by Budrene and Berg.
Other mathematical results on model \eqref{i1} can be found {\it e.g.~}in \cite{TW, W3} and in references therein.

Another approach to model   Budrene and Berg experiments consists in the handling the concentration of a nutrient $n(t,x)$ which leads to the system of three equations
\begin{equation}\label{intr1}
\begin{split}
&u_t= d_u\Delta u-\div(u\nabla\chi(c))+g(u)nu,\\
&c_t=d_c\Delta c+\alpha u-\beta c,\\
&n_t=d_n\Delta n-\gamma g(u)nu.
\end{split}
\end{equation}
In fact, this approach appears in other  models which are basically similar to the one in  \eqref{intr1}, see {\it e.g.}  \cite{T95,LMT99,SK00,P06}.
The authors of these works  suggest that the chemotactic effect generates
spotty patterns which are a consequence of a chemotaxis-induced instability.
 However, they have not shown that such models
generate geometrically regulated patterns which are observed in experiments when an initial nutrient is changed.

For this reason, Mimura and his collaborators  \cite{AMM10} proposed a new system of differential equations  with two internal states of bacteria: active and less-active ones. 
Denoting  the density of active bacteria  by $u(x,t)$, the density of inactive bacteria by $w(x,t)$, the density of nutrient by $n(x,t)$, and the concentration of chemoattractant by $c(x,t)$,  the new diffusion-chemotaxis-growth system has the form
\begin{equation}\label{ecoli1i}
\begin{split}
&u_t= \Delta u-\div(u\nabla\chi(c))+g(u)nu-b(n)u, \\
&c_t=d_c\Delta c+\alpha u-\beta c, \\
&n_t=d_n\Delta n-\gamma g(u)nu, \\
&w_t= b(n)u.
\end{split}
\end{equation}
In the beginning of the next section, we formulate assumptions which are imposed on   parameters and functions in equations \eqref{ecoli1i}.
Here, we only  remark that the first three equations are closed for $u, c$ and $n$, and the density  $w$ can be obtained from $u$ and $n$ by the formula
\begin{equation}\label{w:0}
w(x,t) = \int_0^t b(n(x,\tau))u(x,\tau) d \tau,
\end{equation}
where the initial condition $w(x,0) = 0$ is required from  experiments. Then, the resulting colonial pattern is represented by the total density  $u(x,t)+w(x,t)$.
{The main goal of this work is to show that 
  there exists an asymptotic
 inactive bacteria configuration $w_\infty\in L^\infty(\Omega)$ such that
\begin{equation*}
u(x,t)\; \xrightarrow{t\to\infty}\; 0 \qquad \text{and}\qquad   w (x,t) \; \xrightarrow{t\to\infty}\;  w_\infty (x)
\end{equation*}
which can be regarded as a formation  of a stationary colonial pattern.}

\begin{figure}
 \setlength{\unitlength}{1mm}
{\footnotesize
\begin{picture}(60,200)
 \put(-5,160){\includegraphics[scale=0.3]{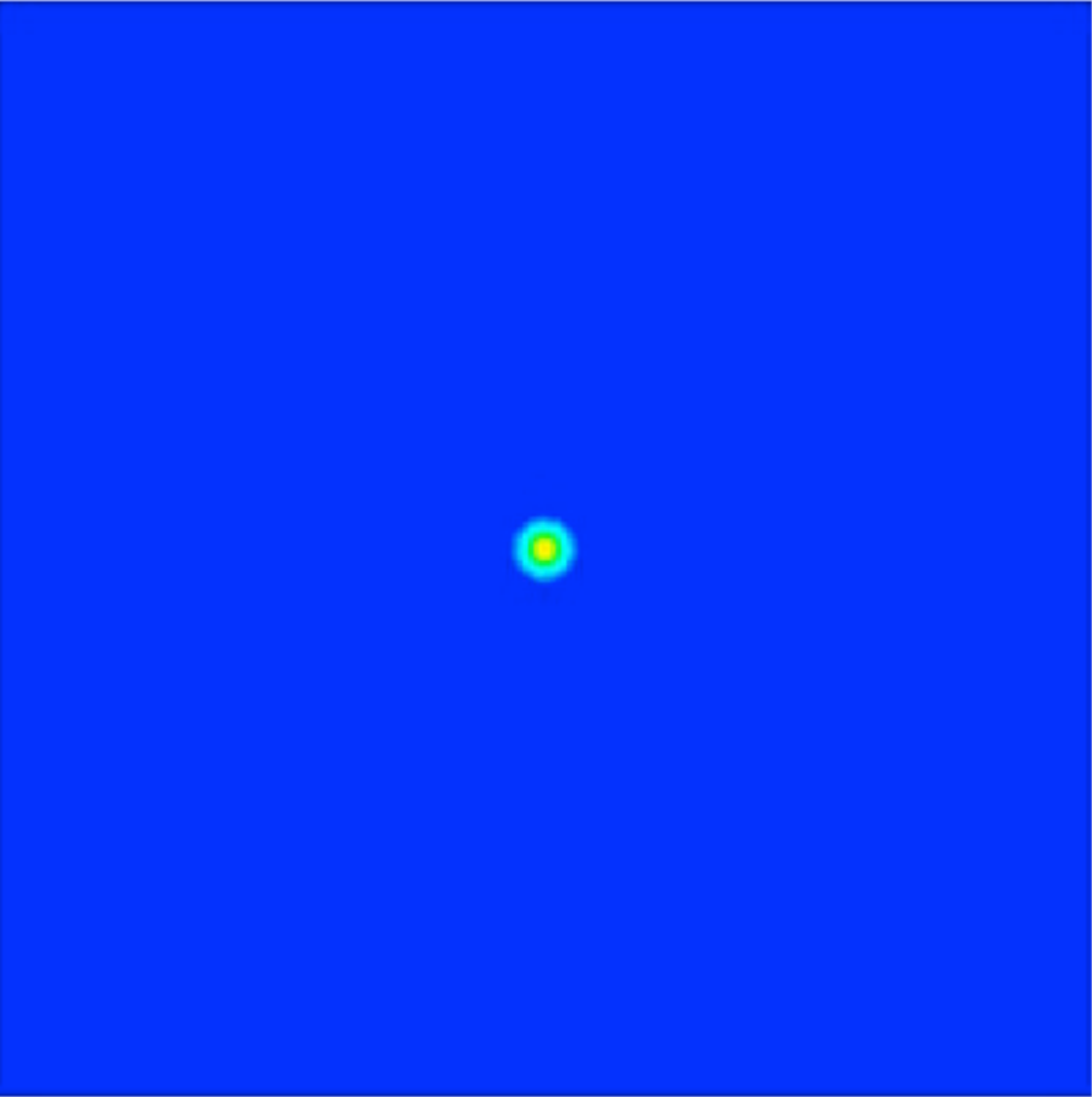}}
 \put(35,160){\includegraphics[scale=0.3]{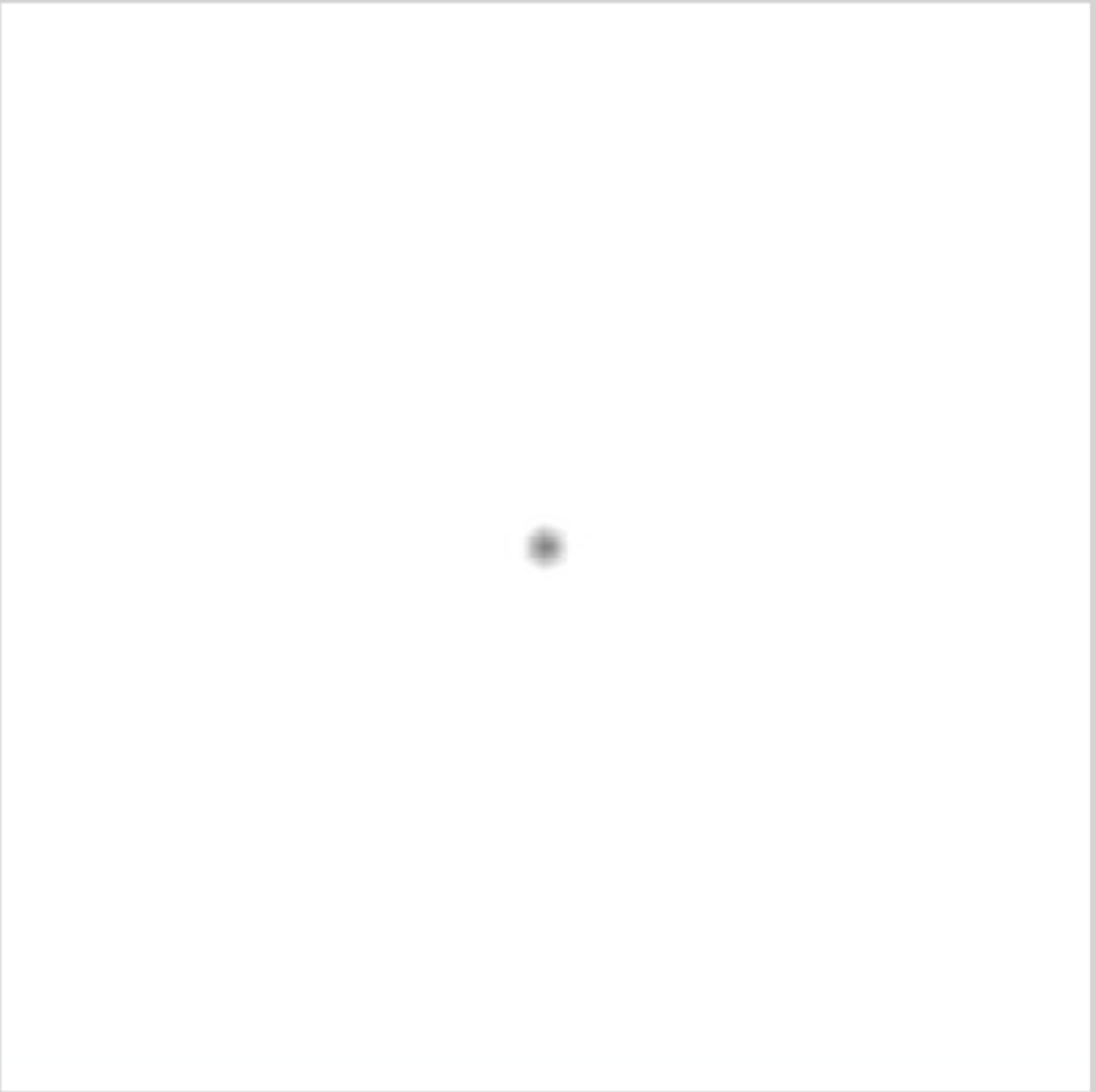}}

 \put(-5,120){\includegraphics[scale=0.3]{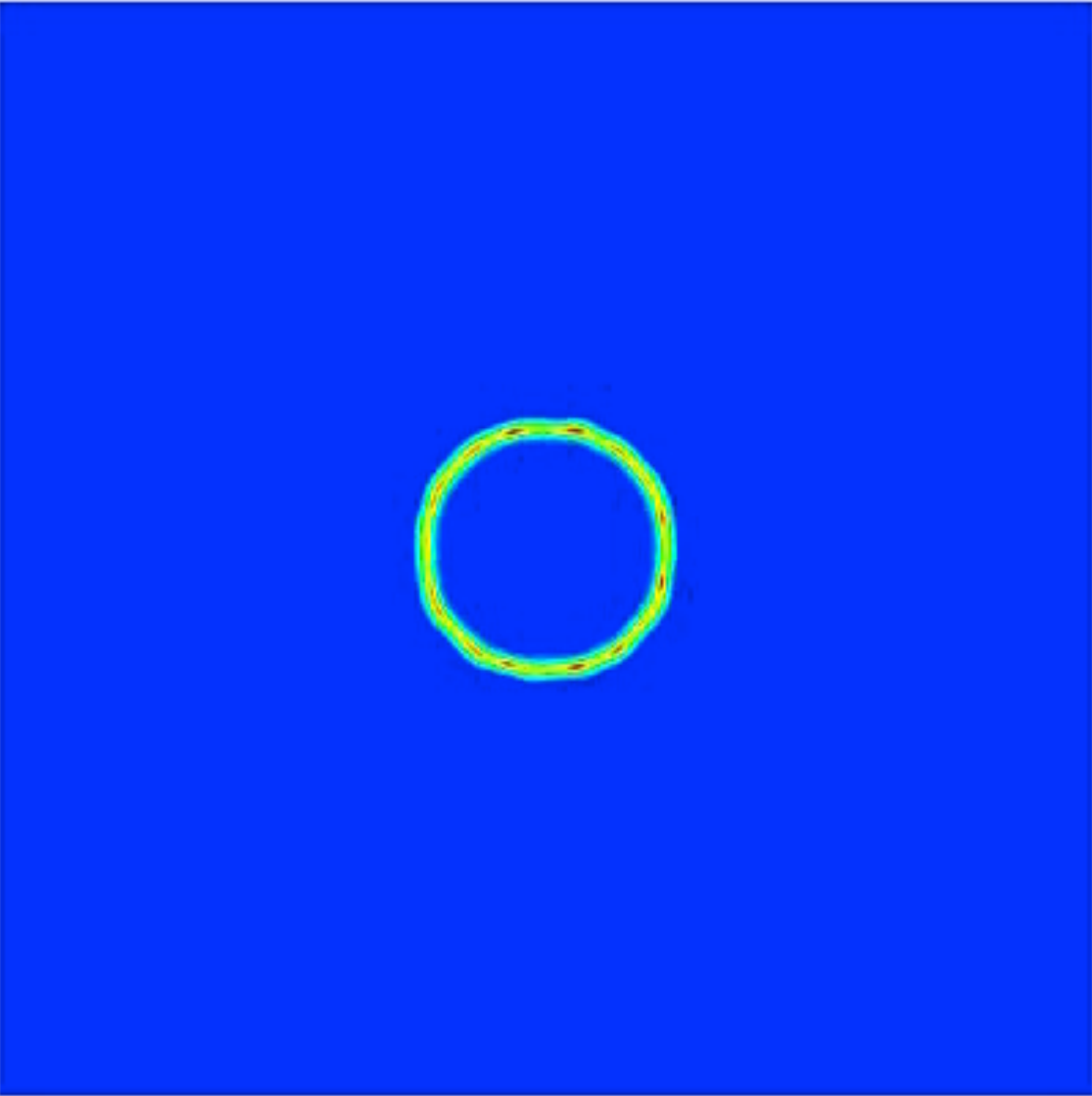}}
 \put(35,120){\includegraphics[scale=0.3]{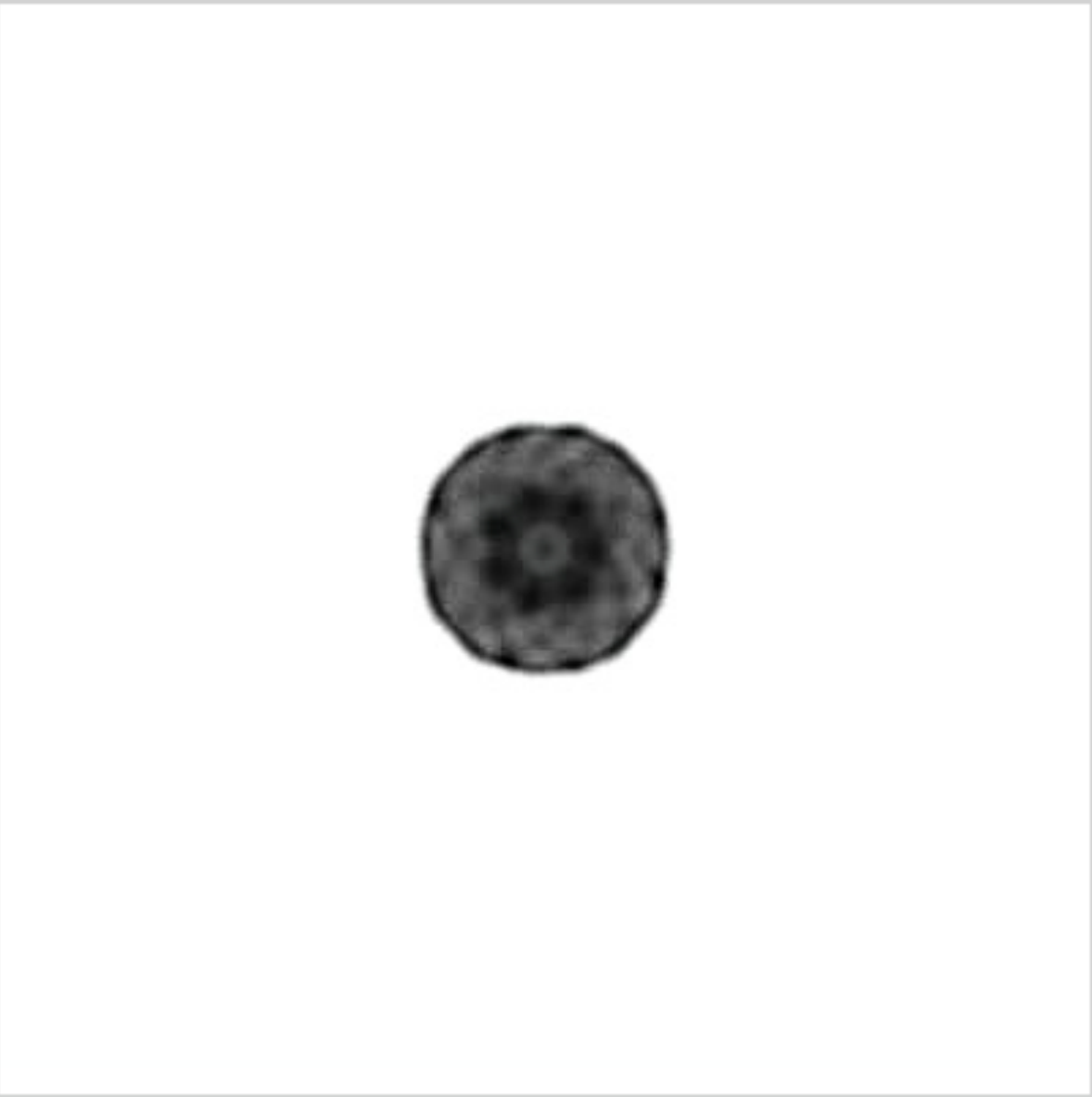}}

 \put(-5,80){\includegraphics[scale=0.3]{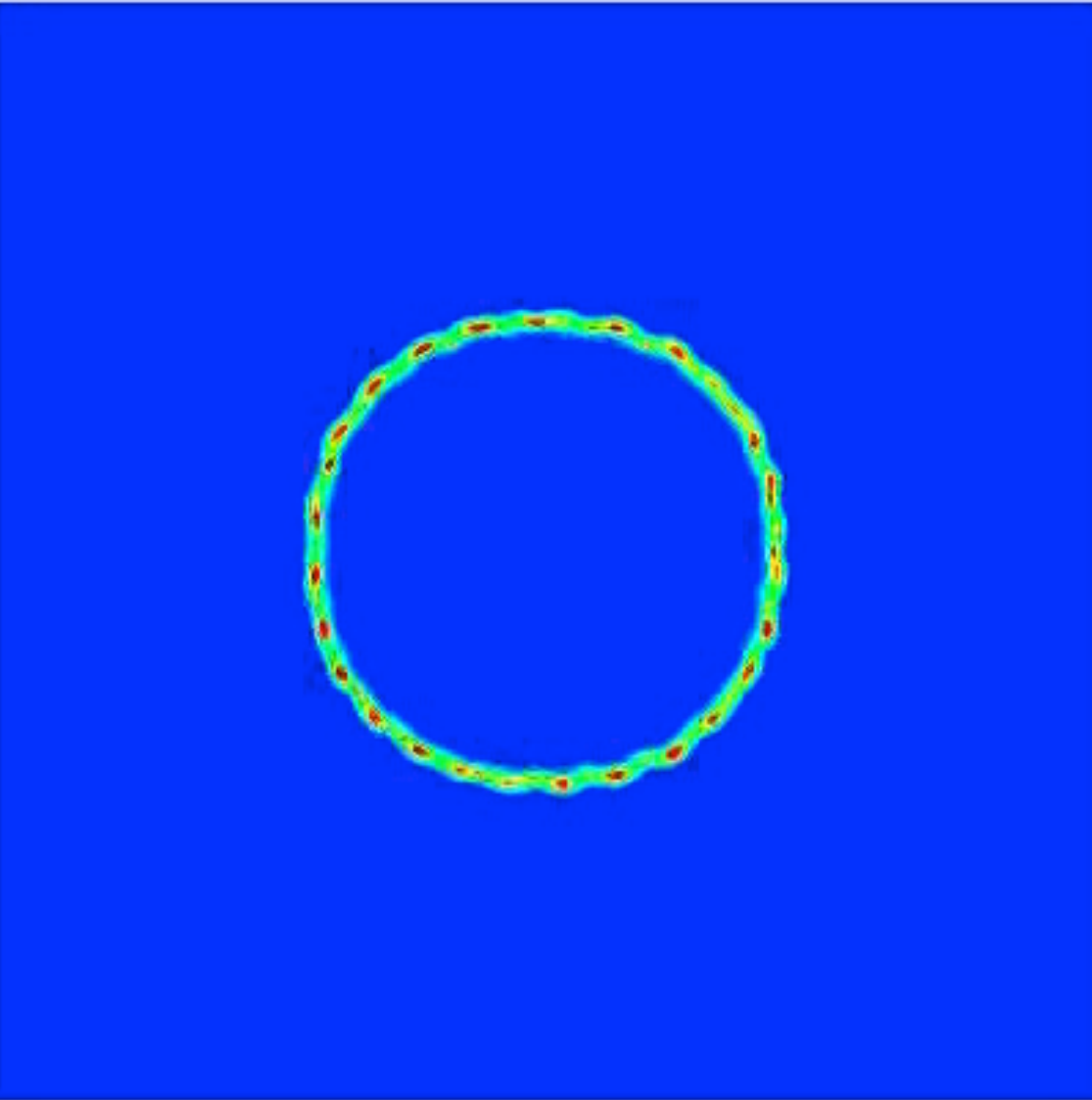}}
 \put(35,80){\includegraphics[scale=0.3]{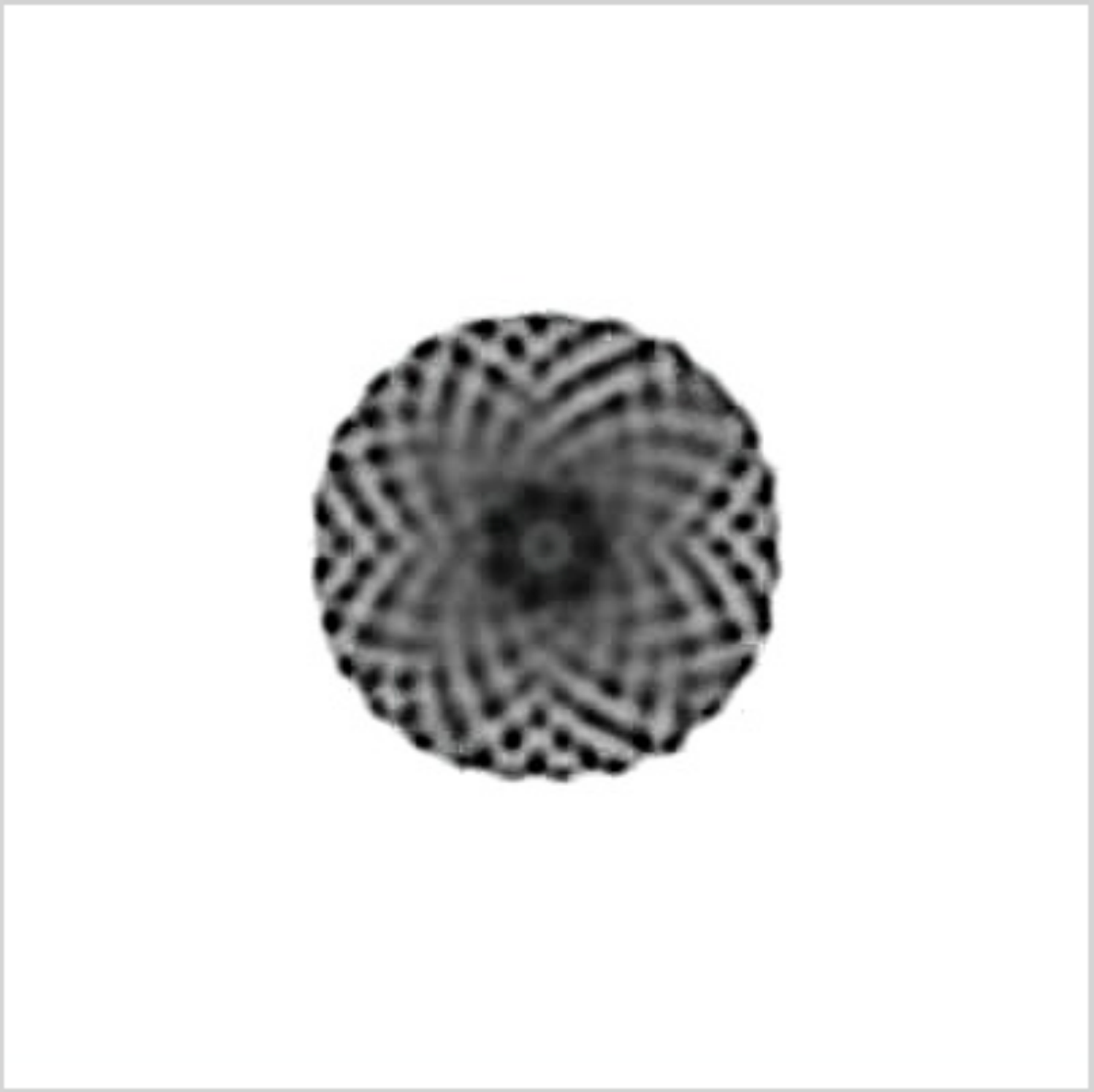}}

 \put(-5,40){\includegraphics[scale=0.3]{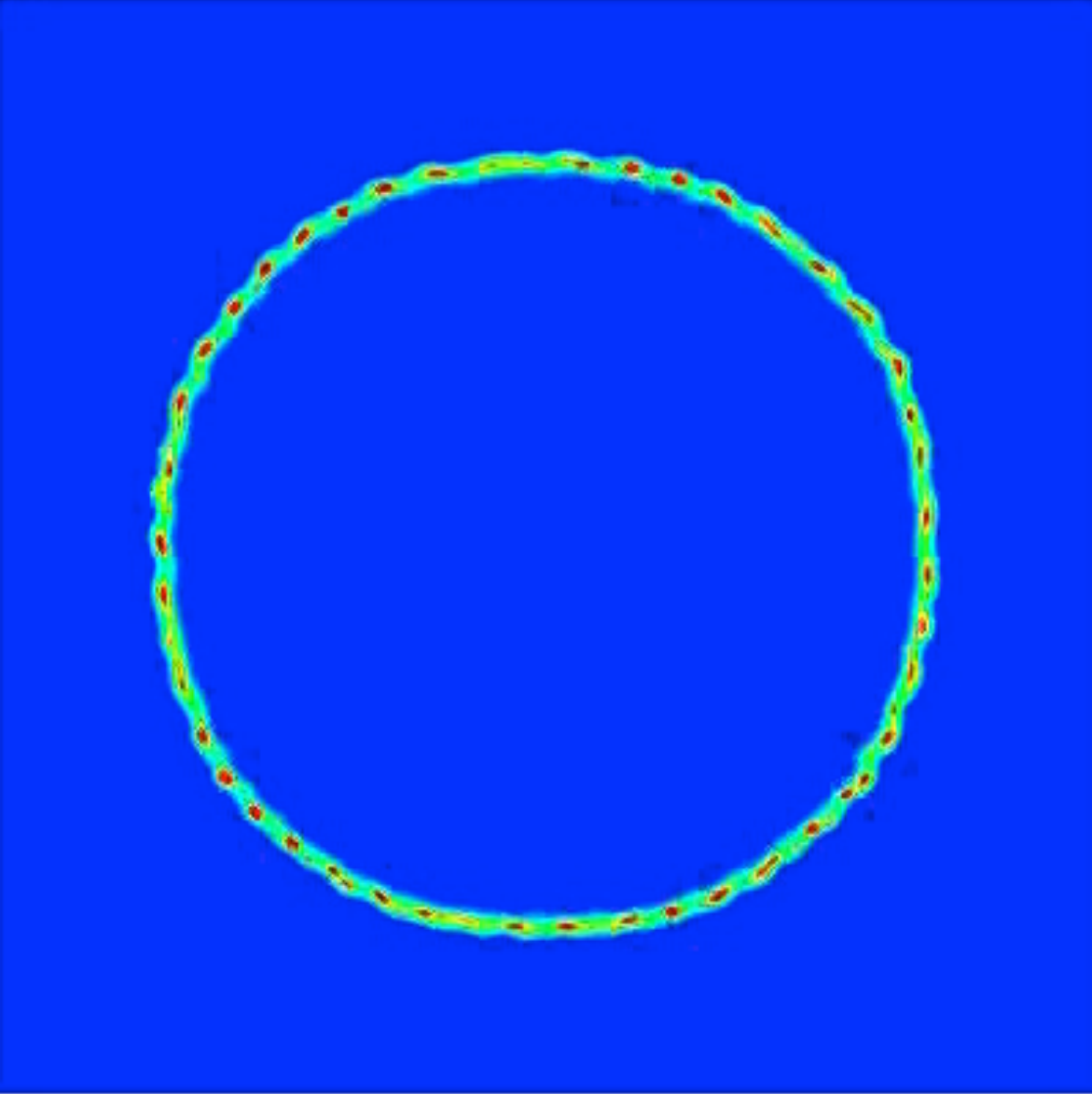}}
 \put(35,40){\includegraphics[scale=0.3]{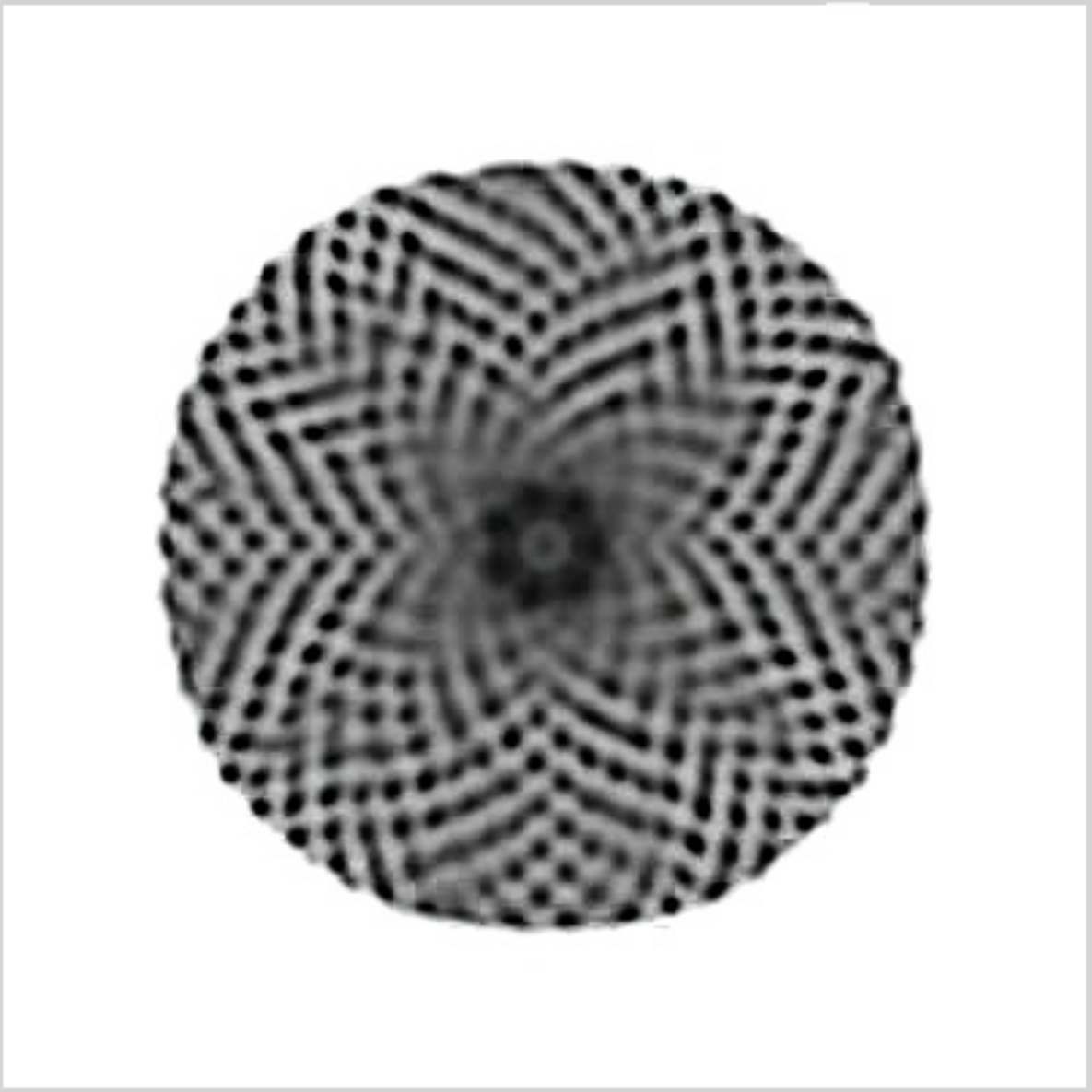}}

 \put(-5,0){\includegraphics[scale=0.3]{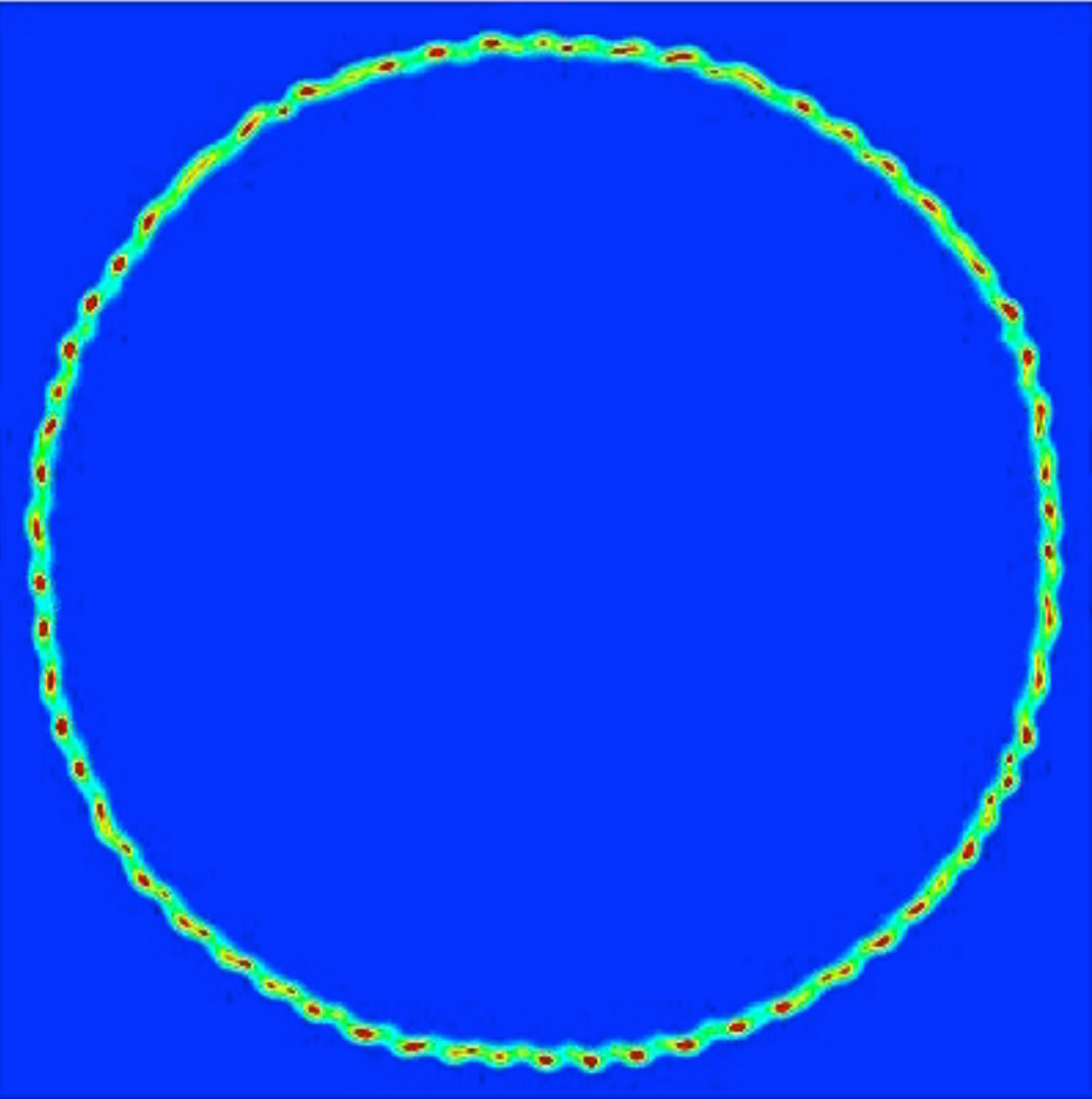}}
 \put(35,0){\includegraphics[scale=0.3]{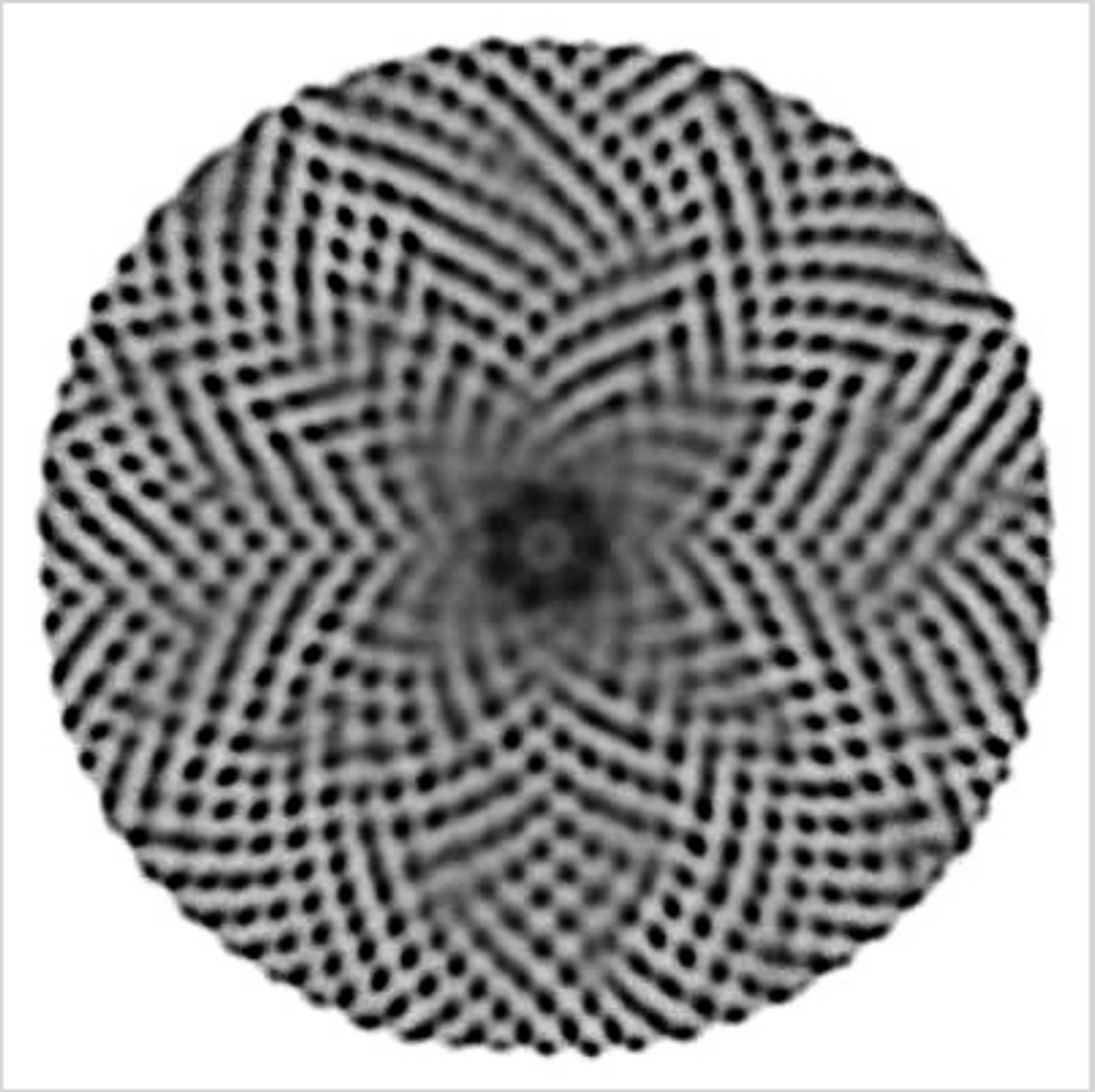}}

 \put(9,198){$u(x,t)$}
\put(42,198){$u(x,t)+w(x,t)$}
\put(-25, 176){$t=1$}
\put(-25, 136){$t=380$}
\put(-25, 96){$t=800$}
\put(-25, 56){$t=1380$}
\put(-25, 16){$t=1840$}
\end{picture}
}
\caption{Numerical simulations of the  densities  $u(x,t)$ and $u(x,t)+w(x,t)$ which satisfy
an initial-boundary value problem for system \eqref{ecoli1i}
 with  $d_c = 10, d_n = 2, \alpha = \beta = \gamma = 1$,
$
g(u) = \frac{1}{2} (1 + \tanh (100 (u - 0.05)),~\chi (c) = \frac{\chi_0 c^2}{c^2 + 0.0625}
$
with $\chi_0 = 0.053$, $b(n)=0.05$, and supplemented with a suitable initial condition concentrated at the origin. 
}
\label{fig:num}
\end{figure}

Unfortunately, our results  do not give any information about a shape of the
limit profile $w_\infty(x)$, as {\it e.g.}~presented on Fig.~\ref{fig:num}.
Numerical simulations presented  on that figure show that, for certain specific functions $g$, $b$ and $\chi$, system \eqref{ecoli1i} describes a formation of chevron-like patterns which are closely related to those observed in biological experiments. Moreover, it is especially remarkable that such systems generate geometrically different patterns depending  on  initial nutrient concentrations, as it was done in  experiments performed by Budrene and Berg.

The aim of this paper is to discuss  properties of solutions to system \eqref{ecoli1i} from a mathematical viewpoint. 
In Theorem \ref{th_inft}, we generalize one-dimensional results from \cite{HMT1} containing and analysis  the one-dimensional initial-boundary value problem for system
\eqref{ecoli1i}  for some specific  functions $g$, $b$ and $\chi$.
Then, we show an analogous theorem  on
 the global-in-time existence and on the large time behavior
of solutions in the space dimensions two and three, under suitable smallness assumptions on  initial conditions, see Theorem~\ref{lp_estim} below.
 We also show that  well-concentrated solutions of a suitable modification
of system \eqref{ecoli1i} may blow up in finite time, see Theorem \ref{blow-up} below for more details.

\subsection*{Notation}
In the sequel, the usual norm of the Lebesgue space $L^p (\Omega)$ with respect to the spatial variable is denoted by $\|\cdot\|_p$ for all $p \in [1,\infty]$ and $W^{k,p}(\Omega)$ is the corresponding Sobolev space with its usual norm defined by
$
\| u \|_{ W^{k,p} } =  \sum_{ |\alpha|\leq k} \| D^\alpha  u\|_p   
$.
%
The letter $C$ corresponds to a generic constant (always independent of $x$ and $t$) which may vary from line to line. Sometimes, we write, $C=C(\alpha,\beta,\gamma, ...)$ when we want to emphasize the dependence of $C$ on parameters~$\alpha,\beta,\gamma, ...$. 

%
\section{Results and comments}

In this work, we prove results on the existence and the large time behavior of solutions to the system
\begin{align}
&u_t= \Delta u-\div(u\nabla\chi(c))+g(u)nu-b(n)u, \label{ecoli1}\\
&c_t=d_c\Delta c+\alpha u-\beta c ,\label{ecoli2}\\
&n_t=d_n\Delta n-\gamma g(u)nu, \label{ecoli3}\\
&w_t= b(n)u, \label{ecoli4}
\end{align}
 considered in a bounded domain
$\Omega\subset\R^d$ with a smooth boundary $\partial\Omega$.
We supplement  these equations  with the Neumann boundary conditions
\begin{align}\label{bound_cond_ecoli}
\partial_\nu  u=\partial_\nu  c=\partial_\nu  n=0
\qquad \text{for}\quad x\in\partial\Omega \quad \text{and}\quad t>0
\end{align}
as well as with  non-negative initial data
\begin{align}
u(x,0)=u_0(x),\quad c(x,0)=c_0(x),\quad  n(x,0)=n_0(x),\quad  w(x,0)=w_0(x). \label{ecoli-ini}
\end{align}
Here, we impose the following 
assumptions on coefficients and on functions which appear in equations
\eqref{ecoli1}--\eqref{ecoli4}.

\begin{assumption} \label{ass}
The diffusion coefficients $d_c>0$ and $d_n>0$ as well as the letters $\alpha>0$, $\beta>0$, $\gamma>0$ in equations \eqref{ecoli2}--\eqref{ecoli4}  denote given constants. Moreover, for
 the functions  $g,b\in C^1([0,\infty))$ and $\chi \in C^2([0,\infty))$, we assume
\begin{itemize}
\item[(i)] $g(0)=0\ $ and $\ g=g(s)\ $ is increasing for $s>0$ and bounded with $G_0\equiv\sup_{s\geq 0}g(s)$;
\item[(ii)] $b(0)=B_0>0\ $ and $\ b=b(s)>0$ is decreasing for $s>0$;
\item[(iii)]  $\chi^\prime, \chi''\in L^\infty([0,\infty))$.
\end{itemize}
\end{assumption}

Under these assumptions, problem \eqref{ecoli1}--\eqref{ecoli-ini} has a unique local-in-time solution
 for all sufficiently regular initial conditions. Moreover, this solution is non-negative if initial conditions \eqref{ecoli-ini} are non-negative. These are more-or-less standard results which we recall in Proposition \ref{thm:local}, below. In this work, we focus mainly on the behavior of non-negative solutions to problem \eqref{ecoli1}--\eqref{ecoli-ini} for large values of time.

{
First, we discuss spatially homogeneous ({\it i.e.}~$x$-independent) non-negative solutions.
By  Proposition \ref{kinetic} below,  such solutions are global-in-time and  converge exponentially towards  constant steady states:
$
(0, 0, \n_\infty, \bar{w}_\infty)
$
 with constants   $\n_\infty\geq 0$ and $\bar{w}_\infty\geq 0$.}
 In Section~\ref{sec-local}, we also study the large time behavior of the mass of a space inhomogeneous solution 
of problem  \eqref{ecoli1}--\eqref{ecoli-ini}
 and we show in Theorem~\ref{conv_mass} below that
it  behaves for large values of time analogously as  space homogeneous solutions, namely, 
\begin{align*}
\left(\int_\Omega u(x,t)\dx, \int_\Omega c(x,t)\dx, \int_\Omega n(x,t)\dx, \int_\Omega w(x,t)\dx\right)
\xrightarrow{t\to\infty}(0,0,\tilde n_\infty, \tilde w_\infty)
\end{align*}
for constants $\tilde n_\infty\geq 0$ and $\tilde w_\infty>0$.

Next, we consider problem \eqref{ecoli1}--\eqref{ecoli-ini} in the one dimensional case and we prove that all solutions corresponding to sufficiently regular, non-negative initial conditions are global-in-time and converge uniformly towards certain steady states. This result has been already proved in \cite{HMT1} for problem \eqref{ecoli1}--\eqref{ecoli-ini} with particular functions $g$, $b$ and $\chi$.
Here, however, we propose a different approach which allows us to consider  general nonlinearities.

\begin{theo}\label{th_inft}
Assume that $d=1$ and $\Omega\subset\R$ is an open and bounded interval.  Let  Assumptions \ref{ass} hold true.
{For every non-negative initial datum  $(u_0,c_0,n_0,w_0)\in  C(\bar \Omega)\times W^{1,p}(\Omega)\times  C(\bar \Omega)\times  C(\bar \Omega)$} with some 
 $p\in\,(1,+\infty)$,
the corresponding solution $(u,c,n,w)$ of problem \eqref{ecoli1}--\eqref{ecoli-ini} exists for all $t>0$ and is non-negative. Moreover, there exists a constant $n_\infty\geq 0$ and a non-negative function $w_\infty\in L^\infty(\Omega)$ such that
\begin{align*}
\big{(}u(x,t),c(x,t),n(x,t),w(x,t)\big{)}\xrightarrow{t\to\infty}(0,0,n_\infty,w_\infty(x))
\end{align*}
exponentially in $L^\infty(\Omega)$.
\end{theo}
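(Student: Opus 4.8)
The plan is to combine standard parabolic regularity with the structural dissipation coming from the nutrient equation, the decisive quantitative input being the finiteness of the total reaction $\int_0^\infty\!\int_\Omega g(u)nu\,\dx\,dt$. I would organise the argument in four stages: a priori bounds together with global existence, decay of $u$ to zero, the exponential rate, and finally the convergence of $n$ and $w$. The elementary bounds come first. The reaction term in \eqref{ecoli3} is non-positive, so the maximum principle gives $0\le n(x,t)\le\|n_0\|_\infty$; by Assumption \ref{ass}(ii) this yields a uniform positive lower bound $b(n)\ge b_*:=b(\|n_0\|_\infty)>0$, while Assumption \ref{ass}(i) gives $0\le g(u)\le G_0$. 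Integrating \eqref{ecoli3} over $\Omega$ and using \eqref{bound_cond_ecoli} shows that $t\mapsto\int_\Omega n\,\dx$ is non-increasing, whence $\gamma\int_0^\infty\!\int_\Omega g(u)nu\,\dx\,dt\le\int_\Omega n_0\,\dx<\infty$; this is the quantity that drives the whole analysis. Global existence is, as usual for chemotaxis systems, the main technical obstacle, but in dimension $d=1$ it is accessible: I would bound $\|c\|_{W^{1,\infty}}$ in terms of $\|u\|_p$ through the smoothing properties of the semigroup generated by $d_c\Delta-\beta$, use $\chi',\chi''\in L^\infty$ to control the chemotactic drift $\div(u\nabla\chi(c))$, and then run a Moser-type iteration on $\|u\|_p$ (the favourable one-dimensional embedding $W^{1,2}(\Omega)\hookrightarrow L^\infty(\Omega)$ is what makes this close) to rule out finite-time blow-up and to obtain bounds uniform on every finite time interval.

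Second, I would prove that $u\to0$. The mass balance for \eqref{ecoli1}, combined with the consumption bound above through an integrating-factor argument using $b(n)\ge b_*$, already gives $\|u(t)\|_1\to0$; this is precisely the content of Theorem \ref{conv_mass}. Writing $u$ by Duhamel's formula for the heat semigroup and interpolating against the uniform-in-time bounds from the first stage, I would upgrade this to $\|u(t)\|_\infty\to0$. Since $c$ solves \eqref{ecoli2} with source $\alpha u\to0$ and linear damping $-\beta c$, parabolic estimates then yield $\|c(t)\|_{W^{1,\infty}}\to0$, and in particular $\|\nabla c(t)\|_\infty\to0$.

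Third comes the exponential rate, which I expect to be the most delicate point because of the chemotactic coupling. Once $t$ is large enough that $g(\|u(t)\|_\infty)\|n_0\|_\infty<b_*/2$, the reaction in \eqref{ecoli1} is pointwise dissipative. Testing \eqref{ecoli1} against $u$, the only dangerous term is the chemotactic cross term, which I would estimate by Young's inequality as
\[
\int_\Omega u\,\chi'(c)\,\nabla u\cdot\nabla c\,\dx\le\tfrac12\int_\Omega|\nabla u|^2\,\dx+C\|\nabla c\|_\infty^2\|u\|_2^2 .
\]
The gradient part is absorbed by diffusion, and for $t$ large both $\|\nabla c\|_\infty$ and $g(\|u\|_\infty)$ are small, so the resulting coefficient is negative and $\ddt\|u\|_2^2\le-2\delta\|u\|_2^2$ for some $\delta>0$. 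This gives exponential decay of $\|u\|_2$, and one further smoothing step transfers it to $\|u\|_\infty$ and then, through \eqref{ecoli2}, to $\|c\|_\infty$. The key point is that only smallness (not yet exponential decay) of $\|\nabla c\|_\infty$ and $g(\|u\|_\infty)$ is needed to enter this regime, after which the decay becomes self-improving.

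Finally, the limits of $n$ and $w$ follow. Because $u$ decays exponentially, the forcing $\gamma g(u)nu$ in \eqref{ecoli3} is exponentially small, so $n$ behaves like a solution of the heat equation with Neumann conditions; the spectral gap of the Neumann Laplacian then forces $n(x,t)$ to converge exponentially to the constant $n_\infty:=\lim_{t\to\infty}|\Omega|^{-1}\int_\Omega n\,\dx$, which exists because $\int_\Omega n\,\dx$ is non-increasing and bounded below. For $w$, equation \eqref{ecoli4} gives $w_t=b(n)u\ge0$, so $w(\cdot,t)$ is pointwise non-decreasing, and the exponential estimate $\int_t^\infty b(n)u\,d\tau\le B_0\int_t^\infty\|u(\tau)\|_\infty\,d\tau\le Ce^{-\delta t}$ shows that $w(x,t)$ converges, uniformly and exponentially, to the bounded limit $w_\infty(x)=w_0(x)+\int_0^\infty b(n(x,\tau))u(x,\tau)\,d\tau\in L^\infty(\Omega)$.
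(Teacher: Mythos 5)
Your proposal is correct in substance and follows the same global skeleton (a priori bounds, qualitative decay of $u$, exponential rate, then $n$ and $w$), but it extracts the exponential rate by a genuinely different mechanism. The paper never obtains the rate from an energy inequality: it first proves the qualitative limit $\|u(t)\|_\infty\to 0$ by splitting off the chemotactic term in the Duhamel formula (comparing $u$ with the solution $v$ of the forced heat equation \eqref{rd1} and invoking Lemma \ref{lemm_rd}), then observes that once $\|u\|_\infty$ is small the reaction coefficient satisfies $g(u)n-b(n)\le-\mu<0$ pointwise, so the \emph{mass} obeys $\frac{d}{\dy{t}}\int_\Omega u\dx\le-\mu\int_\Omega u\dx$ and decays exponentially; the rate is then propagated to $\|c_x\|_p$, $\|c\|_\infty$ and $\|u\|_\infty$ purely by semigroup estimates and Lemma \ref{serre}. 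Your route instead gets $\frac{d}{\dy{t}}\|u\|_2^2\le-2\delta\|u\|_2^2$ by testing \eqref{ecoli1} against $u$ and absorbing the cross term once $\|\nabla c\|_\infty$ and $g(\|u\|_\infty)$ are small. This is valid and somewhat more self-contained, but note that it still contains the paper's mass estimate implicitly: the Neumann heat semigroup does not decay on constants, so upgrading exponential decay from $\|u\|_2$ to $\|u\|_\infty$ by smoothing is legitimate only because $\|u\|_1\le|\Omega|^{1/2}\|u\|_2$ also decays exponentially, which controls the otherwise non-decaying mean mode. The treatment of $n$ and $w$ coincides with the paper's Steps 6 and 7.

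One point in your first stage needs repair. The linchpin of the one-dimensional argument --- the reason the theorem holds without any smallness assumption --- is that $\|c_x(t)\|_q$ for every finite $q$ is controlled by $\sup_s\|u(s)\|_1$ alone (Lemma \ref{cx_to_0}, using that $(t-s)^{-\frac12(1-\frac1q)-\frac12}$ is integrable at $s=t$ when $d=1$), and $\|u\|_1$ is bounded a priori by the mass identity \eqref{mass-ecoli}. Only after this does the $L^2$ energy estimate for $u$ close (Lemma \ref{L2-bound}), and one further Duhamel application gives $\sup_t\|u(t)\|_\infty<\infty$, hence global existence via \eqref{BlowUpNorm}. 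Your plan as written --- bound $\|c\|_{W^{1,\infty}}$ in terms of $\|u\|_p$ and then run a Moser iteration on $\|u\|_p$ --- is circular, since the uniform-in-time $\|u\|_p$ bound is precisely what the iteration is supposed to produce, and a bound that is merely uniform on finite time intervals would not support the interpolation arguments you use in the later stages. Starting the bootstrap from the $L^1\to L^q$ ($q<\infty$) smoothing estimate for $c_x$ removes the circularity and makes your argument complete.
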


An analogous result holds true in higher dimensions, however,  under a smallness assumption imposed on initial conditions.

\begin{theo}\label{lp_estim}
Let $d\in\{2,3\}$ and 
Assumptions \ref{ass} hold true. 
Fix $p_0\in(\frac d 2,\frac d {d-2})$. There exists $\varepsilon(p_0)>0$ such that if $$\max\big(\|u_0\|_{p_0},\|n_0\|_1,\|\nabla c_0\|_{2p_0}\big)<\varepsilon(p_0),$$ then the solution $(u,c,n,w)$ 
of problem \eqref{ecoli1}--\eqref{ecoli-ini}
exists for all $t>0$.
Moreover, there exist a constant $n_\infty\geq 0$ and a non-negative function $w_\infty\in L^\infty(\Omega)$ such that
\begin{align}\label{conv_mult}
\big{(}u(x,t),c(x,t),n(x,t),w(x,t)\big{)}\xrightarrow{t\to\infty}(0,0,n_\infty,w_\infty(x))
\end{align}
exponentially in $L^\infty(\Omega)$.
\end{theo}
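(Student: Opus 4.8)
The plan is to combine the local existence and continuation statement of Proposition \ref{thm:local} with a bootstrap (continuation) argument carried out in the very norms appearing in the smallness hypothesis, namely $\|u(t)\|_{p_0}$ and $\|\nabla c(t)\|_{2p_0}$, weighted by an exponential factor $e^{\delta t}$ with a small rate $\delta>0$. First I would record the a~priori bounds that need no smallness: non-negativity of $(u,c,n,w)$, the comparison bound $0\le n(x,t)\le\|n_0\|_\infty$ coming from $n_t\le d_n\Delta n$, and the monotonicity $\|n(t)\|_1\le\|n_0\|_1$. Since $n_t\le d_n\Delta n$, comparison with the Neumann heat semigroup gives $\|n(t)\|_\infty\le\|e^{td_n\Delta}n_0\|_\infty\to\|n_0\|_1/|\Omega|$, so that the asymptotic nutrient level is controlled by the small quantity $\|n_0\|_1$. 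This is the mechanism by which, for large time, the net reaction rate $g(u)n-b(n)$ becomes strictly negative (recall $g(0)=0$ and $b(0)=B_0>0$), which is what ultimately forces $u\to0$.

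The core step is the mild formulation. Writing the $u$-equation as $u_t=(\Delta-b_*)u+(b_*-b(n))u+g(u)nu-\div(u\nabla\chi(c))$ with $b_*=b(\|n_0\|_\infty)>0$, Duhamel's formula together with the $L^p$--$L^q$ smoothing estimates for $e^{t\Delta}$ and for $\nabla e^{t(d_c\Delta-\beta)}$ lets me bound $\|u(t)\|_{p_0}$ and $\|\nabla c(t)\|_{2p_0}$ in terms of the initial data and of the quadratic chemotactic contribution. Using $\chi'\in L^\infty$ and Hölder, $\|u\nabla\chi(c)\|_{2p_0/3}\le\|\chi'\|_\infty\|u\|_{p_0}\|\nabla c\|_{2p_0}$, and the gradient heat kernel produces the singular factor $(t-s)^{-1/2-d/(4p_0)}$; the lower bound $p_0>d/2$ is exactly what makes this factor time-integrable, while the upper bound $p_0<d/(d-2)$ keeps $2p_0$ subcritical so that the reciprocal estimate $u\mapsto\nabla c$ and the later promotion to $L^\infty$ both close. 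Setting $\mathcal M(T)=\sup_{0<t<T}e^{\delta t}\big(\|u(t)\|_{p_0}+\|\nabla c(t)\|_{2p_0}\big)$, these estimates yield an inequality of the schematic form $\mathcal M(T)\le C\varepsilon+C\,\mathcal M(T)^2$, so that for $\varepsilon=\varepsilon(p_0)$ small enough $\mathcal M(T)$ cannot leave a fixed small interval; by the continuation criterion this gives $T_{\max}=\infty$ together with exponential decay of $u$ and $\nabla c$ in $L^{p_0}$ and $L^{2p_0}$ respectively.

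Once global existence and these decay rates are in hand, I would upgrade convergence to $L^\infty$ by a short parabolic bootstrap: feeding $\|u(t)\|_{p_0}\le Ce^{-\delta t}$ back into the Duhamel formula and iterating the smoothing estimates raises the integrability up to $L^\infty$, giving $\|u(t)\|_\infty+\|c(t)\|_\infty\le Ce^{-\delta' t}$. For the nutrient, $n(t)=e^{td_n\Delta}n_0-\gamma\int_0^t e^{(t-s)d_n\Delta}\big(g(u)nu\big)(s)\dy{s}$; the first term tends to the constant $\|n_0\|_1/|\Omega|$ at the spectral-gap rate of the Neumann Laplacian, and the consumption integrand is $O(e^{-\delta' s})$, so $n(t)\to n_\infty$ in $L^\infty$ exponentially for some constant $n_\infty\ge0$. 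Finally, for the inactive population I would use the explicit representation $w(x,t)=w_0(x)+\int_0^t b(n(x,s))u(x,s)\dy{s}$: since $\|b(n)u\|_\infty\le B_0\|u\|_\infty\le Ce^{-\delta' t}$ is integrable in time, the integral converges uniformly and $w(\cdot,t)\to w_\infty:=w_0+\int_0^\infty b(n)u\,\dy{s}\in L^\infty(\Omega)$ with an exponential tail bound, which is precisely the claimed convergence \eqref{conv_mult}.

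I expect the main obstacle to be closing the bootstrap in the presence of the reaction term $g(u)nu$, which, unlike the chemotactic term, is only linear in $u$ and carries no automatically small prefactor: one must genuinely exploit that the damping $-b(n)u$ and the relaxation of $n$ toward the small mean $\|n_0\|_1/|\Omega|$ (together with $g(0)=0$) dominate this growth \emph{uniformly} in time, not merely on compact intervals. Making the constants in $\mathcal M(T)\le C\varepsilon+C\mathcal M(T)^2$ uniform over $[0,\infty)$, so that the chemotactic quadratic term and the reaction term are absorbed simultaneously, is the delicate point, and the precise exponent window $d/2<p_0<d/(d-2)$ is dictated by this balance.
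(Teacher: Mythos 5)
Your treatment of the chemotactic term is essentially the paper's: Duhamel's formula, the $L^p$--$L^q$ smoothing of the Neumann heat semigroup, H\"older's inequality giving $\|u\nabla\chi(c)\|_{2p_0/3}\le\|\chi'\|_\infty\|u\|_{p_0}\|\nabla c\|_{2p_0}$, and the observation that $p_0>\frac d2$ makes the singularity $(t-s)^{-\frac12-\frac{d}{4p_0}}$ integrable while $p_0<\frac{d}{d-2}$ closes the reciprocal estimate for $\nabla c$. The gap is exactly where you locate it yourself, and it is not a technicality: in your scheme the reaction term $u(g(u)n-b(n))$ contributes a term \emph{linear} in $\mathcal{M}(T)$ whose coefficient is of order $G_0\|n_0\|_\infty+B_0$ and hence not small (only $\|n_0\|_1$, not $\|n_0\|_\infty$, is assumed small), so the inequality $\mathcal{M}(T)\le C\varepsilon+C\mathcal{M}(T)^2$ is not what the estimates actually produce. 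Your proposed remedies (the sign of $(b_*-b(n))u$, relaxation of $\|n(t)\|_\infty$ toward $\|n_0\|_1/|\Omega|$) control large times but not the transient on a time interval of order one, during which $\|u\|_{p_0}$ can grow by a factor like $e^{CG_0\|n_0\|_\infty}$ that is uncontrolled over the admissible data; you flag this as ``the delicate point'' but do not resolve it.

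The paper resolves it by a different mechanism, which you never invoke: integrating \eqref{ecoli1} and \eqref{ecoli3} gives the identity \eqref{teo-eq1} and hence the unconditional a priori bound $\|u(t)\|_1\le\|u_0\|_1+\gamma^{-1}\|n_0\|_1$ for all $t$, so that $\|u(g(u)n-b(n))(t)\|_1\le C\|u(t)\|_1\le C\varepsilon$; feeding this $L^1$ bound into Lemma \ref{lemm_rd} (this is where $p_0<\frac{d}{d-2}$ is used) makes the reaction term contribute a small \emph{constant} to the bootstrap inequality \eqref{inq_fbis}, not a multiple of $f(t)$. Consequently the smallness argument yields only a uniform-in-time bound on $\|u(t)\|_{p_0}$ (no weight $e^{\delta t}$), which is then promoted to $\sup_{t>0}\|u(t)\|_\infty<\infty$. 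The exponential decay is obtained afterwards and does not come from smallness at all: the same identity together with $b(n)\ge b(\|n_0\|_\infty)>0$ gives $u\in L^1((0,\infty);L^1(\Omega))$ and hence $\|u(t)\|_1\to0$ (Theorem \ref{conv_mass}); interpolation with the uniform $L^\infty$ bound gives $\|u(t)\|_\infty\to0$; and only then, using $g(0)=0$, does $g(u)n-b(n)\le-\mu<0$ hold for large $t$, which yields the exponential rate. To repair your argument you should drop the exponential weight from the bootstrap norm, control the reaction term through the conserved $L^1$ mass rather than through $\mathcal{M}(T)$, and derive the decay from the dissipation structure rather than from the smallness of the data.
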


Here, 
we have to limit ourselves to the dimension $d\in\{2,3\}$ (note that the interval $(\frac{d}{2},\frac{d}{d-2})$ is nonempty only in this case)
because of methods used in the proof of Theorem~\ref{lp_estim}. 
 Obviously, this is not an important  constraint from a point of view of applications. 

There is an immediate  question if the smallness assumption in Theorem~\ref{lp_estim}  is indeed necessary to show both results:
the global-in-time existence of non-negative solutions to problem \eqref{ecoli1}--\eqref{ecoli-ini} and their exponential convergence toward steady states as in \eqref{conv_mult}.
It seems that  this is  indeed the case 
in view of blowup results obtained for 
 the so-called parabolic-elliptic Keller-Segel model
of chemotaxis, see {\it e.g.}~\cite{JL92, N01},
as well as for its parabolic-parabolic counterpart 
\eqref{KS0}, see \cite{W4,M20} and the references therein.

In our next result,  we use an idea which is well-known in the study of the blowup phenomenon for 
 the parabolic-elliptic 
Keller-Segel model.
Thus,  we  consider solutions to a modified problem 
\eqref{ecoli1}--\eqref{ecoli-ini}, 
where the parabolic equation \eqref{ecoli2}  for $c=c(x,t)$ is replaced by its elliptic counterpart:
\begin{align}
&u_t= \Delta u-\div(u\nabla \chi(c))+g(u)nu-b(n)u, \label{eq1-b}\\
&0=\Delta c+ \alpha u- \beta c, \label{eq2-b}\\
&n_t=d_n\Delta n-\gamma g(u)nu,\label{eq3-b}
\end{align}
in a bounded domain $\Omega\subset\R^d$, supplemented with the Neumann boundary conditions
\begin{align}\label{bound_cond-b}
\partial_\nu  u=\partial_\nu  c=\partial_\nu  n=0\quad\text{for}\quad x\in\partial\Omega\quad\text{and}\quad t>0,
\end{align}
and with non-negative initial data
\begin{align}
u(x,0)=u_0(x),  \quad n(x,0)=n_0(x). \label{eq-ini-b}
\end{align}
Here, we have omitted the equation for $w=w(x,t)$ because this quantity can be obtained from other variables in the way explained above.
{Let us briefly review preliminary results on this initial boundary-value problem.

\begin{itemize}

\item
Under Assumptions \ref{ass},
for every non-negative initial datum   $(u_0,c_0,n_0)\in  C(\bar \Omega)\times W^{1,p}(\Omega)\times  C(\bar \Omega)$ 
with fixed $p\in\,(d,+\infty)$
there exists a unique non-negative local-in-time solution $u=u(x,t)$, $c=c(x,t)$, $n=n(x,t)$ of problem \eqref{eq1-b}--\eqref{eq-ini-b}. Here, it suffices to follow ideas from the proof of Proposition \ref{thm:local}.

\item Under Assumptions \ref{ass} and  if $d=1$,  a  solution to problem  \eqref{eq1-b}--\eqref{eq-ini-b} corresponding to a sufficiently regular nonnegative  initial condition 
is global-in-time and there exists a constant $n_\infty\geq 0$ such that
		\begin{align*}
		\big{(}u(x,t),c(x,t),n(x,t)\big{)}\xrightarrow{t\to\infty}(0,0,n_\infty)
		\end{align*}
		exponentially in $L^\infty(\Omega)$. The proof of this result requires a minor modification of arguments from the proof of Theorem \ref{th_inft}.

\item An analogous asymptotic result holds true  if $d\in\{2,3\}$  under a suitable smallness assumption imposed on initial data --  as those in Theorem \ref{lp_estim}.

	\end{itemize}
}

In the following theorem, we show that 
sufficiently well-concentrated solutions of problem \eqref{eq1-b}--\eqref{eq-ini-b}
in two dimensions 
 cannot be extended to all $t>0$  and  here, we follow classical ideas by Nagai \cite{N01}.
Presenting this particular result, we want to emphasize  that several blowup results obtained for the parabolic-elliptic model of chemotaxis can be directly applied to the model from this work.

\begin{theo}\label{blow-up}
Let $d=2$ and Assumptions \ref{ass} hold true with $\chi(c)=\chi_0 c$ for some $\chi_0>0$.
Consider a nonnegative local-in-time solution $(u,c,n)$ of problem \eqref{eq1-b}--\eqref{eq-ini-b} corresponding to an initial datum 
$(u_0,n_0)\in C(\bar \Omega)\times C(\bar \Omega)$.
 Assume that
\begin{align*}
M_0=\int_\Omega u_0(x)\dx>\frac{8\pi}{\alpha\chi_0}.
\end{align*}
For each $q\in \Omega$ there exists $\varepsilon(q)>0$ such
that if \[\int_{\Omega}u_0(x)|x-q|^2\dx<\varepsilon(q)\]
then the solution $(u,c,n)$ cannot be extended to all $t>0$.
\end{theo}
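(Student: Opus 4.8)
The plan is to adapt the classical second-moment (virial) argument of Nagai, with the two new features being the reaction terms and the bounded Neumann geometry. Arguing by contradiction, I assume the solution exists for all $t>0$. Fix a smooth nonnegative weight $\psi\in C^2(\bar\Omega)$ with $\psi(x)=|x-q|^2$ on a ball $B=B(q,r_0)\subset\subset\Omega$, with $\psi$ comparable to $\min(|x-q|^2,1)$ globally, and with $\partial_\nu\psi=0$ on $\partial\Omega$; set
\[
\mathcal M(t)=\int_\Omega u(x,t)\psi(x)\dx\ge 0 .
\]
Differentiating in time, inserting \eqref{eq1-b} and integrating by parts, the Neumann conditions $\partial_\nu u=\partial_\nu c=0$ together with $\partial_\nu\psi=0$ cancel all boundary contributions, leaving
\[
\ddt\mathcal M=\int_\Omega u\,\Delta\psi\dx+\chi_0\int_\Omega u\,\nabla c\cdot\nabla\psi\dx+\int_\Omega\big(g(u)nu-b(n)u\big)\psi\dx .
\]

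Next I would extract the leading behavior of each term under the working hypothesis that the mass stays concentrated near $q$. Since $\Delta\psi=2d=4$ on $B$ and is merely bounded off $B$, and the mass outside $B$ is $O(\mathcal M)$, the first term is $4M(t)+O(\mathcal M)$, where $M(t)=\int_\Omega u\dx$. For the chemotaxis term I solve \eqref{eq2-b} through the Green function $G$ of $-\Delta+\beta$ with Neumann data, writing $c=\alpha\int_\Omega G(\cdot,y)u(y)\dy$ and splitting $G(x,y)=-\tfrac1{2\pi}\ln|x-y|+R(x,y)$ with $R$ smooth in the interior. Symmetrizing the singular double integral under $x\leftrightarrow y$ produces exactly
\[
\chi_0\int_\Omega u\,\nabla c\cdot\nabla\psi\dx=-\frac{\chi_0\alpha}{2\pi}M(t)^2+O\!\big(\mathcal M^{1/2}\big),
\]
where the regular part $R$ and the off-$B$ remainder are absorbed into the error because each carries a factor $|x-q|$, hence a factor $\mathcal M^{1/2}$ after Cauchy--Schwarz. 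The reaction terms are harmless: the maximum principle applied to \eqref{eq3-b} gives $\|n(t)\|_\infty\le\|n_0\|_\infty$, so $g(u)n\le G_0\|n_0\|_\infty$ and the growth contribution is $O(\mathcal M)$, while $-b(n)u\psi\le 0$ only helps. Altogether,
\[
\ddt\mathcal M\le 4M-\frac{\chi_0\alpha}{2\pi}M^2+C\big(\mathcal M^{1/2}+\mathcal M\big)
=\frac{\chi_0\alpha}{2\pi}M\Big(\frac{8\pi}{\alpha\chi_0}-M\Big)+C\big(\mathcal M^{1/2}+\mathcal M\big).
\]

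To close the argument I control the total mass from below. Integrating \eqref{eq1-b} and using $b(n)\le b(0)=B_0$ gives $\ddt M\ge -B_0 M$, hence $M(t)\ge M_0 e^{-B_0 t}$; since $M_0>\tfrac{8\pi}{\alpha\chi_0}$ strictly, there are $\delta>0$ and $T_1>0$ with $M(t)\ge\tfrac{8\pi}{\alpha\chi_0}+\delta$ on $[0,T_1]$. On this interval the leading bracket is bounded above by a fixed negative constant, so choosing $\varepsilon(q)$ small enough that the errors $C(\mathcal M^{1/2}+\mathcal M)$ stay below half of that constant whenever $\mathcal M\le\varepsilon(q)$, I get $\ddt\mathcal M\le -c_0<0$ on the maximal subinterval where $\mathcal M\le\varepsilon(q)$. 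Thus $\mathcal M(t)\le\varepsilon(q)-c_0 t$, and shrinking $\varepsilon(q)$ further so that $\varepsilon(q)/c_0<T_1$ forces $\mathcal M(t)<0$ before $T_1$, contradicting $\mathcal M\ge 0$; hence the solution cannot be global.

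The step I expect to be the main obstacle is making the error estimate in the chemotaxis term rigorous on the bounded domain: one must control the contribution of the regular part $R$ and of any mass that has escaped the interior ball $B$ toward $\partial\Omega$, uniformly on $[0,T_1]$, and verify the self-consistent bootstrap, since smallness of $\mathcal M$ is both used to bound the errors and re-derived from $\ddt\mathcal M<0$. This needs quantitative bounds on $\nabla_x R$ on $B\times B$ and a careful splitting of the double integral; by contrast, the reaction terms and the localization of the weight are routine once $\|n\|_\infty$ is controlled.
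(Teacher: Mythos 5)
Your argument is correct and follows essentially the same route as the paper's proof: a truncated second moment centered at $q$, Nagai's Green-function inequality for the chemotaxis term (your $O(\mathcal{M}^{1/2}+\mathcal{M})$ error is precisely the paper's $C_1M(t)I(t)+C_2M(t)^{3/2}I(t)^{1/2}$ in \eqref{nagai} once the uniform mass bound \eqref{mass-ecoli} is invoked), the reaction terms absorbed via $0\le g(u)n\le G_0\|n_0\|_\infty$ and $b\ge 0$, and the lower bound $M(t)\ge M_0e^{-B_0t}$. The only substantive difference is in the final step: the paper avoids your bootstrap on smallness of $\mathcal{M}$ by Young's inequality on the $I^{1/2}$ term, an integrating factor $e^{-C_4 t}$, and an analysis of $A+Bt-D(1-e^{-kt})$ valid for all $t>0$, whereas you run a continuity argument on a finite window $[0,T_1]$ where $M$ stays above $\frac{8\pi}{\alpha\chi_0}+\delta$ — both close correctly.
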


This theorem is proved in Section \ref{sec:blow}.
 {Here, let us only remark that  the  linear ``death'' term 
 $-b(n)u$ in equation \eqref{eq1-b}
 is not strong enough  to prevent a blow-up of solutions in a finite time. It seems that even some superlinear death terms fail to ensure 
 the existence of global-in-time solutions as stated in \cite{W3}.}

\section{Preliminary results on existence and large time behavior of solutions}\label{sec-local}


First, we present briefly a result on an existence of local-in-time solutions to the considered initial-boundary value problem.

%

 \begin{prop}\label{thm:local}
 Let Assumptions \ref{ass} hold true. Fix $p\in\,(d,+\infty)$. For every non-negative initial datum   $(u_0,c_0,n_0,w_0)\in  C(\bar \Omega)\times W^{1,p}(\Omega)\times  C(\bar \Omega)\times  C(\bar \Omega)$ there exists a maximal time of existence $T_{\mathrm{max}}\in(0,+\infty]$ and a non-negative unique classical solution 
 $u=u(x,t)$, $c=c(x,t)$, $n=n(x,t)$ , $w=w(x,t))$
 of problem  \eqref{ecoli1}--\eqref{ecoli-ini}
 such that
 \[ \begin{array}{l} u,w \in C\big(\bar \Omega \times [0,T_{\mathrm{max}})\big)\cap  C^{2,1}\big(\bar \Omega \times (0,T_{\mathrm{max}})\big),\\
 c\in C\big(\bar \Omega \times [0,T_{\mathrm{max}})\big)\cap  C^{2,1}\big(\bar \Omega \times (0,T_{\mathrm{max}})\big)\cap L^{\infty}_\mathrm{loc}( (0,T_{\mathrm{max}}); W^{1,p}(\Omega)),\\
 n\in  C\big(\bar \Omega \times [0,T_{\mathrm{max}})\big)\cap  C^{2,1}\big(\bar \Omega \times (0,T_{\mathrm{max}})\big)\cap L^{\infty}( \Omega \times (0,T_{\mathrm{max}})). \end{array} \]
 Moreover, if $T_{\mathrm{max}}<+\infty$, then
 \begin{equation}\label{BlowUpNorm}
 	\limsup_{t\to T_{\mathrm{max}}} \left(\ \| u(t) \|_\infty + \| c(t) \|_{W^{1,p}}\ \right) = +\infty.
 \end{equation}
  \end{prop}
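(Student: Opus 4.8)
The plan is to construct the solution by a standard contraction-mapping argument based on the variation-of-constants (Duhamel) formula, exploiting that equations \eqref{ecoli1}--\eqref{ecoli3} are closed in $(u,c,n)$ while $w$ is recovered afterwards from \eqref{w:0}. Denote by $(e^{t\Delta})_{t\ge0}$, $(e^{t(d_c\Delta-\beta)})_{t\ge0}$ and $(e^{td_n\Delta})_{t\ge0}$ the analytic semigroups generated on $L^q(\Omega)$ by the Neumann realizations of $\Delta$, $d_c\Delta-\beta$ and $d_n\Delta$. I would rewrite the first three equations as the integral system
\begin{align*}
u(t) &= e^{t\Delta}u_0 - \int_0^t e^{(t-s)\Delta}\div\big(u(s)\chi'(c(s))\nabla c(s)\big)\dy s + \int_0^t e^{(t-s)\Delta}\big(g(u)nu-b(n)u\big)(s)\dy s,\\
c(t) &= e^{t(d_c\Delta-\beta)}c_0 + \alpha\int_0^t e^{(t-s)(d_c\Delta-\beta)}u(s)\dy s,\\
n(t) &= e^{td_n\Delta}n_0 - \gamma\int_0^t e^{(t-s)d_n\Delta}\big(g(u)nu\big)(s)\dy s,
\end{align*}
and seek a fixed point of the resulting map $\Phi$ in a closed ball of
\[
X_T = C([0,T];L^\infty(\Omega))\times C([0,T];W^{1,p}(\Omega))\times C([0,T];L^\infty(\Omega))
\]
for $T>0$ small. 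Since $p>d$, the embedding $W^{1,p}(\Omega)\hookrightarrow C(\bar\Omega)$ makes $\nabla\chi(c)=\chi'(c)\nabla c$ a bounded continuous field.

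The heart of the estimates are the smoothing bounds of the Neumann heat semigroup: for $1\le r\le q\le\infty$ there is $C>0$ with
\[
\|e^{t\Delta}f\|_q \le C\,t^{-\frac d2(\frac1r-\frac1q)}\|f\|_r, \qquad \|\nabla e^{t\Delta}f\|_q \le C\,t^{-\frac12-\frac d2(\frac1r-\frac1q)}\|f\|_r,
\]
together with the divergence-form bound $\|e^{t\Delta}\div f\|_q\le C\,t^{-\frac12-\frac d2(\frac1r-\frac1q)}\|f\|_r$. Using assumption (iii), $\|u\chi'(c)\nabla c\|_p\le\|\chi'\|_\infty\|u\|_\infty\|\nabla c\|_p$, so the drift contribution to $u(t)$ is controlled in $L^\infty$ by $C\int_0^t(t-s)^{-\frac12-\frac d{2p}}\|u(s)\|_\infty\|\nabla c(s)\|_p\dy s$, and the exponent $-\frac12-\frac d{2p}$ is integrable near $s=t$ \emph{precisely because} $p>d$; this is where the hypothesis $p\in(d,+\infty)$ enters. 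The reaction terms are handled with $\sup_{s\ge0}g(s)=G_0$ from (i) and boundedness of $b$ on bounded intervals. For $c$ the gradient estimate gives $\|\nabla c(t)\|_p\le C\|c_0\|_{W^{1,p}}+C\int_0^t(t-s)^{-1/2}\|u(s)\|_\infty\dy s$, while for $n$ only the $L^\infty$-contractivity of $e^{td_n\Delta}$ is needed. Since $g,b\in C^1$ and $\chi\in C^2$, all nonlinearities are locally Lipschitz, so the same estimates show $\Phi$ is a contraction on a small ball; Banach's theorem then yields a unique local mild solution, and the usual continuation argument produces a maximal existence time $T_{\mathrm{max}}$.

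Non-negativity and the $L^\infty$ bound on $n$ come from the maximum principle. Because $g,u,n\ge0$, the reaction term in \eqref{ecoli3} is non-positive, so $n_t\le d_n\Delta n$ and the parabolic maximum principle gives $0\le n(x,t)\le\|n_0\|_\infty$, whence $n\in L^\infty(\Omega\times(0,T_{\mathrm{max}}))$. Freezing the already-constructed $c$ and $n$, the equation for $u$ is a linear drift-diffusion equation with bounded advection $\nabla\chi(c)$ and bounded zero-order coefficient $g(u)n-b(n)$, so $u_0\ge0$ forces $u\ge0$; and $w\ge0$ is immediate from \eqref{w:0} since its integrand $b(n)u\ge0$. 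Once $u,n$ are bounded and $c\in W^{1,p}$ with $p>d$ (hence $c$ and $\nabla c$ Hölder continuous), I would bootstrap via parabolic $L^q$ and Schauder estimates to upgrade the mild solution to a classical $C^{2,1}$ solution with the stated regularity, $c$ additionally lying in $L^\infty_{\mathrm{loc}}((0,T_{\mathrm{max}});W^{1,p}(\Omega))$. Finally $w$ is read off pointwise from \eqref{w:0}: continuity of $n,u$ gives $w\in C(\bar\Omega\times[0,T_{\mathrm{max}}))$, and differentiating under the integral sign using the interior regularity of $n,u$ gives $w\in C^{2,1}(\bar\Omega\times(0,T_{\mathrm{max}}))$.

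For the blowup alternative \eqref{BlowUpNorm}, the key point is that the local existence time $T$ delivered by the contraction depends on the data only through $\|u_0\|_\infty$, $\|c_0\|_{W^{1,p}}$ and $\|n_0\|_\infty$, and that $\|n(t)\|_\infty$ never exceeds $\|n_0\|_\infty$. Hence if $\|u(t)\|_\infty+\|c(t)\|_{W^{1,p}}$ stayed bounded as $t\uparrow T_{\mathrm{max}}<\infty$, one could reapply the local theory from a time close to $T_{\mathrm{max}}$ with a uniform step size, extending the solution past $T_{\mathrm{max}}$ and contradicting maximality; therefore the $\limsup$ must be $+\infty$. I expect the main obstacle to be the chemotaxis drift $\div(u\nabla\chi(c))$: closing the fixed point requires the singular kernel $(t-s)^{-\frac12-\frac d{2p}}$ to be time-integrable and the $W^{1,p}$-regularity of $c$ to feed back consistently into the $L^\infty$-bound for $u$, which is exactly where $p>d$ is used and where the coupling of the three equations is most delicate. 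The second delicate point is the positivity of $u$ in the presence of the drift, best handled by freezing $(c,n)$ and invoking the maximum principle for the resulting linear equation.
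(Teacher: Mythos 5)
Your proposal is correct and follows essentially the same route the paper takes (and only sketches): a Banach fixed-point argument on the Duhamel formulas \eqref{duh1}--\eqref{duh3} using the Neumann heat-semigroup smoothing estimates with $p>d$ to make the drift kernel integrable, the maximum principle for non-negativity and the bound $0\le n\le\|n_0\|_\infty$, bootstrapping to classical regularity, recovery of $w$ from \eqref{w:0}, and the standard continuation argument for \eqref{BlowUpNorm}. The paper itself defers the details to the Keller--Segel literature, so your write-up is in fact more explicit than the published argument while being entirely consistent with it.
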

  
 A  local-in-time solution  in Proposition~\ref{thm:local} 
 can be obtained via the Banach fixed point argument applied to the the following Duhamel formulation of the problem \eqref{ecoli1}--\eqref{ecoli-ini} 
\begin{align}\label{duh1}
\begin{split}
u(t)=&\ {\rm e}^{\Delta t}u_0-\int_0^t{\rm e}^{\Delta(t-s)}\nabla\cdot \big(u(s)\nabla\chi(c(s))\big)\dy{s}\\
&+\int_0^t{\rm e}^{\Delta(t-s)}u(s)(g(u)n-b(n))(s)\dy{s},
\end{split}
\\
c(t)=&\ {\rm e}^{(d_c\Delta-\beta)t}c_0+\alpha\int_0^t{\rm e}^{(d_c\Delta-\beta)(t-s)}u(s)\dy{s},\label{duh2}\\
n(t)=&\ {\rm e}^{d_n\Delta t}n_0-\gamma\int_0^t{\rm e}^{d_n\Delta(t-s)}g(u(s))n(s)u(s)\dy{s}.\label{duh3}
\end{align}
Here, the symbol $\{e^{\Delta t}\}_{t\geq 0}$ denotes the semigroup of linear operators on $L^p(\Omega)$ 
generated by Laplacian with the Neumann boundary conditions
(we recall some estimates of this semigroup in Lemma \ref{lem:LpLq} below). 
A regularity of such a solution is shown by standard bootstrapping arguments. A positivity of the functions  $u(x,t)$, $c(x,t)$, $n(x,t)$  as well as the estimate 
\begin{equation}\label{n:est0}
0\leq n(x,t)\leq \|n_0\|_\infty\quad \text{for all}\quad x\in\Omega,\ t\in[0,T],
\end{equation}
(as long as the term $\gamma g(u)nu$ is nonnegative)
 are a natural consequence of the maximum principle.
 Finally, the function  $w(x,t)$ is recovered as an integral of the other quantities, see equation \eqref{w:0}. 
We skip the detailed proof of Proposition \ref{thm:local} because it can be completed by a  straightforward adaptation of methods for previous works on an initial-boundary value problem for the Keller-Segel model. For details of such a reasoning,  we refer the reader to 
the monograph \cite{yagi} 
as well as to
the papers
\cite[Theorem 3.1]{HW}, 
\cite[Lemma 3.1]{BBTW},  
and to references therein.

Next, we discuss spatially homogeneous non-negative solutions.
Notice that if an initial condition \eqref{ecoli-ini} is independent of $x$, namely, if
\begin{equation}\label{ini-const}
u_0(x)=\u_0,\; c_0(x)=\bar{c}_0,\; \n_0(x)=\n_0,\; \bar{w}_0(x)=\bar{w}_0
\end{equation}
for some  constants $ \u_0, \bar{c}_0, \n_0, \bar{w}_0\in[0,\infty)$,
then the corresponding solution 
 of problem \eqref{ecoli1}--\eqref{ecoli-ini}
is also independent of $x$  which  is an immediate consequence of the uniqueness of solutions established in Proposition \ref{thm:local}.
Let us formulate  a result  on a large time behavior of  such non-negative space homogeneous solutions to problem \eqref{ecoli1}--\eqref{ecoli-ini}.
{
\begin{prop}\label{kinetic}
Let  Assumptions \ref{ass}
be satisfied. For every non-negative, constant initial condition  \eqref{ini-const}, the corresponding solution $\big{(}\u(t), \bar{c}(t), \n(t), \bar{w}(t)\big{)}$ to problem \eqref{ecoli1}--\eqref{ecoli-ini} is $x$-independent, 
non-negative, global-in-time, and converges exponentially towards the constant steady state  $(0, 0, \n_\infty, \bar{w}_\infty)$ for some $\n_\infty\geq 0$ and $\bar{w}_\infty\geq 0$ depending on initial conditions.
\end{prop}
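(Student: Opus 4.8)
The plan is first to observe that, by Proposition~\ref{thm:local} and the uniqueness of solutions, a constant-in-$x$ initial datum \eqref{ini-const} produces a solution that is itself constant in $x$, and therefore solves the kinetic system
\begin{align*}
\u_t &= \u\big(g(\u)\n - b(\n)\big), & \bar c_t &= \alpha\u - \beta\bar c, \\
\n_t &= -\gamma g(\u)\n\u, & \bar w_t &= b(\n)\u,
\end{align*}
with nonnegative constant initial values $\u_0,\bar c_0,\n_0,\bar w_0\ge0$. Nonnegativity of the solution is then read off from the integrating-factor formulas $\u(t)=\u_0\exp\big(\int_0^t(g(\u)\n-b(\n))\,ds\big)$, $\n(t)=\n_0\exp\big(-\gamma\int_0^t g(\u)\u\,ds\big)$, from the differential inequality $\bar c_t\ge-\beta\bar c$, and from $\bar w_t=b(\n)\u\ge0$.

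The key structural ingredient I would use is that $E(t):=\u(t)+\gamma^{-1}\n(t)$ is nonincreasing, since a direct computation gives $E'(t)=-b(\n)\u\le0$ by Assumption~\ref{ass}(ii). This yields at once $0\le\u(t)\le E(0)=\u_0+\gamma^{-1}\n_0$ for all $t$; combined with the monotonicity $0\le\n(t)\le\n_0$ (from $\n_t\le0$), the boundedness of $\bar c$ through its linear equation, and the at-most-linear growth of $\bar w$, this excludes finite-time blowup, so the solution is global-in-time. Moreover $\n(t)$ and $E(t)$ are monotone and bounded, hence convergent: $\n(t)\to\n_\infty\in[0,\n_0]$ and $E(t)\to L\ge0$, so that $\u(t)\to\u_\infty:=L-\gamma^{-1}\n_\infty\ge0$. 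To see that $\u_\infty=0$, I would argue by contradiction: if $\u_\infty>0$, then by continuity $E'(t)=-b(\n)\u\to-b(\n_\infty)\u_\infty<0$, which forces $E(t)\to-\infty$, contradicting $E\ge0$.

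Once $\u(t)\to0$ is established, I expect the exponential rates to follow by a short bootstrap. Since $g(0)=0$ and $g$ is continuous, $g(\u(t))\to0$, while $b(\n(t))\ge b(\n_0)>0$ because $b$ is positive and decreasing and $\n(t)\le\n_0$; hence there exist $T_0>0$ and $\mu:=\tfrac12 b(\n_0)>0$ with $g(\u(t))\n(t)-b(\n(t))\le-\mu$ for $t\ge T_0$, and Gronwall's inequality gives $0\le\u(t)\le\u(T_0)e^{-\mu(t-T_0)}$. Integrating the $\n$- and $\bar w$-equations from $t$ to $\infty$ then shows $\n_\infty-\n(t)=\gamma\int_t^\infty g(\u)\n\u\,ds=O(e^{-\mu t})$ and, with $\bar w_\infty:=\bar w_0+\int_0^\infty b(\n)\u\,ds<\infty$, $\bar w_\infty-\bar w(t)=O(e^{-\mu t})$, while the variation-of-constants formula $\bar c(t)=\bar c_0e^{-\beta t}+\alpha\int_0^t e^{-\beta(t-s)}\u(s)\,ds$ gives $\bar c(t)=O(e^{-\min(\mu,\beta)t})$, with an extra factor $t$ in the borderline case $\mu=\beta$.

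The main obstacle is the step $\u_\infty=0$: because the admissible steady states $(0,0,\n_\infty,\bar w_\infty)$ form a one-parameter family depending on the data, there is no Lyapunov functional with an isolated minimum to appeal to, and one must instead exploit the monotonicity of $E$ directly, as above. Everything else is routine ODE bookkeeping.
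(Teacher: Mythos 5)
Your proof is correct and follows essentially the same route as the paper, which only sketches this proposition and defers the details to the mass argument of Theorem~\ref{conv_mass} (monotonicity of $\bar u+\gamma^{-1}\bar n$ together with the lower bound $b(\bar n)\ge b(\bar n_0)>0$) and to Step~2 of the proof of Theorem~\ref{th_inft} (the estimate $g(\bar u)\bar n-b(\bar n)\le-\mu$ for large $t$). The only blemish is a harmless sign slip: integrating $\bar n_t=-\gamma g(\bar u)\bar n\bar u$ from $t$ to $\infty$ gives $\bar n(t)-\bar n_\infty=\gamma\int_t^\infty g(\bar u)\bar n\bar u\,\dy{s}$, not its negative.
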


\begin{proof}[Sketch of the proof.]
Obviously, the chemotactic term $-\div(u\nabla\chi(c))$ as well as the terms in equations \eqref{ecoli1}--\eqref{ecoli4} containing Laplacian disappear in the case of $x$-independent solutions and we obtain  
the following system of the corresponding ordinary differential equations
\begin{align}
&\frac{d}{\dy{t}}\u= g(\u)\n\u-b(\n)\u  \label{eq1s}\\
&\frac{d}{\dy{t}}\bar{c}=\alpha \u-\beta \bar{c}\label{eq2s}\\
&\frac{d}{\dy{t}}\n=-\gamma g(\u)\n\u \label{eq3s}\\
&\frac{d}{\dy{t}}\bar{w}= b(\n)\u. \label{eq4s}
\end{align}

It is clear that it suffices to consider only 
equations \eqref{eq1s} and \eqref{eq3s} for the functions 
$\u(t)$ and $\n(t)$. This system of two equations has the constant steady state   $(0, \overline{n}_\infty)$ for each constant  $ \overline{n}_\infty\geq 0$.
Applying a routine phase portrait analysis one can show 
 every solution $(\u(t),\n(t))$ of equations \eqref{eq1s} and 
 \eqref{eq3s} which starts in the first quadrant $(\u>0, \n>0)$ at $t=0$ has to remain in this quadrant of the $(\u,\n)$-plane for all future times and converges exponentially towards $(0, \overline{n}_\infty)$.
 We skip other details of this proof because they are analogous to those in the proof of Theorem \ref{conv_mass}, below. 
 \end{proof}
}

Now, we consider solutions of problem \eqref{ecoli1}--\eqref{ecoli-ini} with nonconstant initial conditions and we prove a result analogous to the one in Proposition \ref{kinetic} on the large time behavior of the integrals $\int_\Omega u(x,t)\dx$, $\int_\Omega c(x,t)\dx$, $\int_\Omega n(x,t)\dx$ and $\int_\Omega w(x,t)\dx$.

\begin{theo}\label{conv_mass}
Assume that a non-negative solution $(u,c,n,w)$ of problem \eqref{ecoli1}--\eqref{ecoli-ini} exists for all $t>0$. Let   Assumptions \ref{ass}
 hold true. Then
\begin{align*}
\int_\Omega u(t)\dx\rightarrow 0\quad\text{and}\quad \int_\Omega c(t)\dx\rightarrow 0\qquad \text{as}\ \ t\to\infty,
\end{align*}
and there are constants $\tilde{n}_\infty\geq 0$ and $\tilde{w}_\infty>0$ such that
\begin{align*}
\int_\Omega n(t)\dx\rightarrow \tilde{n}_\infty\quad\text{and}\quad \int_\Omega w(t)\dx\rightarrow \tilde{w}_\infty\qquad \text{as}\ \ t\to\infty.
\end{align*}
\end{theo}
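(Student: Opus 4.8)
The plan is to integrate equations \eqref{ecoli1}--\eqref{ecoli4} over $\Omega$, thereby reducing the question to a closed system of ordinary differential (in)equalities for the four masses, and then to analyze that system by elementary arguments resting on the $L^1$-in-time integrability of the reaction term $\int_\Omega g(u)nu\dx$. Writing
\[
U(t)=\int_\Omega u(t)\dx,\qquad C(t)=\int_\Omega c(t)\dx,\qquad N(t)=\int_\Omega n(t)\dx,\qquad W(t)=\int_\Omega w(t)\dx,
\]
and using the Neumann boundary conditions \eqref{bound_cond_ecoli}, under which $\int_\Omega\Delta(\cdot)\dx=0$ and $\int_\Omega\div(u\nabla\chi(c))\dx=\int_{\partial\Omega}u\,\chi'(c)\,\partial_\nu c\dy{\sigma}=0$, I would obtain
\[
U'=\int_\Omega g(u)nu\dx-\int_\Omega b(n)u\dx,\qquad C'=\alpha U-\beta C,\qquad N'=-\gamma\int_\Omega g(u)nu\dx,\qquad W'=\int_\Omega b(n)u\dx.
\]
Setting $h(t)=\int_\Omega g(u)nu\dx\ge0$, the relation $N'=-\gamma h\le0$ together with $N\ge0$ shows that $N$ is nonincreasing and converges to some $\tilde n_\infty\ge0$, and integrating in time gives moreover $\gamma\int_0^\infty h\dy t=N(0)-\tilde n_\infty<\infty$, i.e.\ $h\in L^1(0,\infty)$.

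Next I would bring in the ``death'' term. By \eqref{n:est0} one has $0\le n\le\|n_0\|_\infty$, and since $b$ is positive and decreasing (Assumption \ref{ass}(ii)) the constant $b_*:=b(\|n_0\|_\infty)>0$ satisfies $b(n(x,t))\ge b_*$ everywhere; hence $\int_\Omega b(n)u\dx\ge b_*U$ and consequently $U'\le h-b_*U$ with $h\in L^1(0,\infty)$. Integrating the pointwise inequality $(e^{b_*t}U)'\le e^{b_*t}h$ on $(s,t)$ yields, for $0\le s\le t$,
\[
U(t)\le e^{-b_*(t-s)}U(s)+\int_s^t e^{-b_*(t-\tau)}h(\tau)\dy\tau\le e^{-b_*(t-s)}U(s)+\int_s^\infty h(\tau)\dy\tau,
\]
so that letting $t\to\infty$ and then $s\to\infty$ gives $U(t)\to0$. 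Integrating instead $b_*U\le h-U'$ on $(0,t)$ and using $U\ge0$ gives $b_*\int_0^\infty U\dy t\le\int_0^\infty h\dy t+U(0)<\infty$, that is $U\in L^1(0,\infty)$.

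It then remains to treat $c$ and $w$. Since $C'=\alpha U-\beta C$ and $U(t)\to0$, the same Duhamel estimate as above (with $\beta$ playing the role of $b_*$ and $\alpha U$ that of $h$) gives $C(t)\to0$. The mass $W$ is nondecreasing because $W'\ge0$, and since $b(n)\le b(0)=B_0$ it satisfies $W'\le B_0U$, so $W(t)\le W(0)+B_0\int_0^\infty U<\infty$; being bounded and monotone, $W$ converges to some $\tilde w_\infty\ge0$. To obtain strict positivity when $\int_\Omega(u_0+w_0)\dx>0$ (which is always the case in the experimental setting, where $w_0\equiv0$ but $u_0\not\equiv0$): if $w_0\not\equiv0$ then already $\tilde w_\infty\ge\int_\Omega w_0\dx>0$; and if $u_0\not\equiv0$, then viewing the $u$-equation as a linear parabolic equation for $u$ with coefficients bounded on compact time intervals, the strong maximum principle gives $u(x,t)>0$ for $t>0$, whence $W'(t_0)=\int_\Omega b(n(t_0))u(t_0)\dx>0$ for any $t_0>0$ and thus $\tilde w_\infty\ge W(t_0)>0$.

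I expect the only genuinely delicate point to be the passage from the differential inequality $U'\le h-b_*U$ with merely $h\in L^1(0,\infty)$ to the conclusion $U(t)\to0$: a direct Gronwall bound does not work because $\int_0^\infty e^{b_*t}h(t)\dy t$ need not be finite, which is precisely why the Duhamel integral must be split at a large time $s$ before letting $t\to\infty$. The remaining ingredients --- justifying differentiation under the integral sign for the classical solutions of Proposition \ref{thm:local}, the vanishing of the boundary terms, and the monotone-convergence arguments for $N$ and $W$ --- are routine.
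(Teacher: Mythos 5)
Your proposal is correct and follows essentially the same route as the paper's proof: integrate the equations over $\Omega$, use the sign of $\int_\Omega g(u)nu\dx$ to get monotone convergence of $\int_\Omega n\dx$, and use the lower bound $b(n)\ge b(\|n_0\|_\infty)>0$ to obtain $u\in L^1\big((0,\infty);L^1(\Omega)\big)$ and hence $\int_\Omega u\dx\to 0$; the only mechanical difference is that the paper deduces that $\lim_{t\to\infty}\int_\Omega u\dx$ exists from the monotonicity of $t\mapsto\int_\Omega\big(u+\gamma^{-1}n\big)\dx$ and then identifies the limit as $0$ via integrability, whereas you extract the same conclusion from the differential inequality $U'\le h-b_*U$ by a Duhamel splitting --- both rest on the identical $L^1$-in-time estimate. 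One point in your favour: the paper asserts $\tilde w_\infty>0$ without comment (which fails for $u_0\equiv w_0\equiv 0$), while you correctly note that strict positivity needs nontrivial data and supply the strong-maximum-principle argument.
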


\begin{proof}
First, integrating equations \eqref{ecoli1}--\eqref{ecoli4} with respect to $x$, we obtain
\begin{align}
&\frac{d}{dt}\int_\Omega u\dx=\int_\Omega g(u)nu\dx-\int_\Omega b(n)u\dx \label{int_eq1}\\
&\frac{d}{dt}\int_\Omega c\dx=\alpha\int_\Omega u\dx-\beta\int_\Omega c\dx \label{int_eq2}\\
&\frac{d}{dt}\int_\Omega n\dx=-\gamma\int_\Omega g(u)nu\dx \label{int_eq3}\\
&\frac{d}{dt}\int_\Omega w\dx=\int_\Omega b(n)u\dx. \label{int_eq4}
\end{align}
Since,
$
\frac{d}{dt}\left(\int_\Omega u(t)\dx+\frac{1}{\gamma}\int_\Omega n(t)\dx+ \int_\Omega w(t)\dx\right)=0,
$
we get the conservation of mass in the following sense
\begin{align}\label{total-mass}
\int_\Omega u(t)\dx+\frac{1}{\gamma}\int_\Omega n(t)\dx+\int_\Omega w(t)\dx= \int_\Omega u_0\dx+\frac{1}{\gamma}\int_\Omega n_0\dx+\int_\Omega w_0\dx,
\end{align}
for all $t>0$. In particular, since all functions are non-negative, we have
\begin{align}\label{w-mass}
\int_\Omega u(x,t)\dx\leq \int_\Omega u_0\dx+\frac{1}{\gamma}\int_\Omega n_0\dx+\int_\Omega w_0\dx\quad\text{for all}\quad t>0.
\end{align}
Now, we improve this estimate by adding equation \eqref{int_eq1} to equation \eqref{int_eq3} multiplied by $\gamma^{-1}$ and integrating resulting equation over $[0,t]$ to obtain the relation
\begin{align}\label{teo-eq1}
\int_\Omega u(t)+\gamma^{-1}n(t)\dx=\int_\Omega u_0+\gamma^{-1}n_0\dx-\int_0^t\int_\Omega b(n(s))u(s)\dx\dy{s}
\end{align}
which by positivity of $b$ and $u$ implies 
\begin{align}\label{mass-ecoli}
\int_\Omega u(t)\dx\leq \int_\Omega u_0\dx+\frac{1}{\gamma}\int_\Omega n_0\dx.
\end{align}

Next, we observe that, since $g(u)nu\geq 0$, it follows from equation \eqref{int_eq3} that the integral $\int_\Omega n(t)\dx$ is nonincreasing in $t$ and since it is also non-negative, the following finite limit exists
\begin{align}\label{lim}
\lim_{t\to\infty}\int_\Omega n(t)\dx=\tilde{n}_\infty\geq 0.
\end{align}

Now, since $b(n)u\geq 0$, equation \eqref{teo-eq1} implies that the mapping $t\mapsto\int_\Omega u(t)+\gamma^{-1}n(t)\dx$ is also nonincreasing, hence, it has a limit as $t\to\infty$. Consequently, 
using relations \eqref{lim}, we conclude that there exists a constant $\tilde{u}_\infty\geq 0$ such that
$
\lim_{t\to\infty}\int_\Omega u(t)\dx=\tilde{u}_\infty.
$
Moreover, since $\int_\Omega u(t)+\gamma^{-1}n(t)\dx$ is bounded for $t\geq 0$, identity \eqref{teo-eq1} implies that
$b(n)u\in L^1\big{(}(0,\infty);L^1(\Omega)\big{)}$. However, since $b(n)\geq b(\|n_0\|_\infty)>0$, it follows that
\begin{align}\label{u-l1}
u\in L^1((0,\infty);L^1(\Omega)).
\end{align}
Consequently, we have $\tilde{u}_\infty=0$.
Since $b(n)u\in L^1\big{(}(0,\infty), L^1(\Omega)\big{)}$ we obtain from equation \eqref{int_eq4}
\begin{align*}
\lim_{t\to\infty}\int_\Omega w(t)\dx=\int_\Omega w_0\dx+\int_0^\infty\int_\Omega b(n)u\dx\dy{s}\equiv \tilde{w}_\infty>0.
\end{align*}
Finally, $\lim_{t\to\infty}\int_\Omega c(t)\dx=0$ due to equation \eqref{int_eq2} because $\lim_{t\to\infty}\|u(t)\|_1=0$. 
\end{proof}


\section{Problem in one space dimension}
The proof of Theorem \ref{th_inft} requires the following two auxiliary lemmas. First, we find an estimate of $c_x(t)$ which is uniform in time.

\begin{lemm}\label{cx_to_0}
Let the assumptions of Theorem \ref{th_inft} hold true and denote by (u,c,n,w) the corresponding non-negative local-in-time solution to problem \eqref{ecoli1}--\eqref{ecoli-ini} on $[0,T_{\mathrm{max}})$ 
constructed in Proposition  \ref{thm:local}.
 For each $p\in[2,\infty)$ there exists a constant $C=C(p)>0$ independent of $T_{\mathrm{max}}$ such that
$
\|c_x(t)\|_p\leq C
$
for all $t\in(0,T_{\mathrm{max}})$. Moreover, if the solution is global-in-time, then
$
\lim_{t\to\infty}\|c_x(t)\|_p=0
$
for each $p\in[2,\infty)$.
\end{lemm}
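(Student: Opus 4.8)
The plan is to represent $c$ through its Duhamel formula \eqref{duh2}, differentiate it in $x$, and estimate the two resulting terms by the $L^q$--$L^p$ smoothing bounds for the Neumann heat semigroup recalled in Lemma~\ref{lem:LpLq}, together with a uniform $L^1$-bound on $u$. For the latter, I would integrate \eqref{ecoli1} over $\Omega$, add $\gamma^{-1}$ times \eqref{ecoli3} integrated over $\Omega$, and integrate in time on $(0,t)$ exactly as in the derivation of \eqref{mass-ecoli}; since this uses the solution only on its interval of existence, one obtains $\|u(t)\|_1\le M:=\|u_0\|_1+\gamma^{-1}\|n_0\|_1$ for all $t\in(0,T_{\mathrm{max}})$, with $M$ independent of $T_{\mathrm{max}}$.

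Differentiating \eqref{duh2} in $x$ yields
\[
c_x(t)=\partial_x\,e^{(d_c\Delta-\beta)t}c_0+\alpha\int_0^t\partial_x\,e^{(d_c\Delta-\beta)(t-s)}u(s)\dy{s}.
\]
For the integral term, in dimension one we have $\|\partial_x e^{(d_c\Delta-\beta)\tau}f\|_p\le C e^{-\beta\tau}\bigl(1+\tau^{-1+\frac{1}{2p}}\bigr)\|f\|_1$; since $p<\infty$ the exponent satisfies $-1+\frac{1}{2p}>-1$, so $K(p):=\int_0^\infty e^{-\beta\tau}\bigl(1+\tau^{-1+\frac{1}{2p}}\bigr)\dy{\tau}<\infty$, and the integral term is bounded by $\alpha M K(p)$, uniformly in $t$. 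This is exactly where the hypothesis $d=1$ enters: in dimension $d\ge2$ the analogous exponent $-\frac12-\frac d2\bigl(1-\frac1p\bigr)$ is $\le-1$ once $p$ is not too close to $1$, and the convolution kernel ceases to be integrable at $\tau=0$. For the homogeneous term, when $p\le p_0$ (with $c_0\in W^{1,p_0}$, $p_0\in(1,\infty)$ fixed in Theorem~\ref{th_inft}) one bounds $\|\partial_x e^{(d_c\Delta-\beta)t}c_0\|_p\le C e^{-\beta t}\|(c_0)_x\|_{p_0}$ using boundedness of $\Omega$; for $p>p_0$ one first uses \eqref{duh2}, the $L^1$-bound on $u$ and $W^{1,p_0}(\Omega)\hookrightarrow C(\bar\Omega)$ to get $\sup_{0<t<T_{\mathrm{max}}}\|c(t)\|_\infty<\infty$, then, for $t$ away from $0$, steps the semigroup by a fixed time and invokes parabolic smoothing, the remaining range of small $t$ being absorbed by the instantaneous $C^2$-regularization of the equation for $c$.

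For the decay statement, assume the solution is global. By \eqref{u-l1}, $u\in L^1\bigl((0,\infty);L^1(\Omega)\bigr)$, and by Theorem~\ref{conv_mass}, $\|u(t)\|_1\to0$. Splitting the Duhamel integral at $s=t/2$: the part over $(t/2,t)$ is $\le K(p)\sup_{s\ge t/2}\|u(s)\|_1\to0$; the part over $(0,t/2)$ is $\le\bigl(\sup_{\tau\ge t/2}e^{-\beta\tau}(1+\tau^{-1+\frac{1}{2p}})\bigr)\,\|u\|_{L^1((0,\infty);L^1)}\to0$; and the homogeneous term decays exponentially. Hence $\|c_x(t)\|_p\to0$ for every $p\in[2,\infty)$.

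The main obstacle is the homogeneous term for large $p$ near $t=0$: because $c_0$ is only assumed in $W^{1,p_0}$, the quantity $\|c_x(t)\|_p$ may a priori grow as $t\to0^+$, and controlling it uniformly requires the parabolic-smoothing/restarting argument sketched above to be carried out with care. The rest is a routine convolution estimate resting on the strict inequality $-1+\frac{1}{2p}>-1$, which holds only in one space dimension.
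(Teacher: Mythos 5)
Your proof is correct and follows essentially the same route as the paper's: the Duhamel formula \eqref{duh2}, the $L^1\to L^p$ gradient smoothing estimate for the Neumann heat semigroup with exponent $-1+\frac{1}{2p}>-1$ (which is exactly where $d=1$ is used), the uniform mass bound \eqref{mass-ecoli}, and a splitting-of-the-convolution argument for the decay that is the content of Lemma~\ref{serre}. The only divergence is your extra care with the homogeneous term when $c_{0,x}\notin L^p$ for large $p$; the paper simply invokes \eqref{G5} and writes $Ce^{-\beta t}\|c_{0,x}\|_p$, so your restarting/smoothing remark addresses a point the paper glosses over rather than contradicting it.
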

\begin{proof}
Using the Duhamel formula \eqref{duh2} and the estimates of the heat semigroup \eqref{G5}, \eqref{G3} we obtain
\begin{equation}\label{cx:dec}
\begin{split}
\|c_x(t)\|_p&\leq\|\partial_x {\rm e}^{t\Delta-\beta t}c_0\|_p+ \alpha\int_0^t\|\partial_x {\rm e}^{(\Delta-\beta)(t-s)}u(s)\|_p\dy{s}\\
&\leq C{\rm e}^{-\beta t}\|c_{0,x}\|_p+ C\int_0^t\left(1+(t-s)^{-\frac{1}{2}(1-\frac{1}{p})-\frac{1}{2}}\right){\rm e}^{-(\beta+\lambda_1)(t-s)}\|u(s)\|_1\dy{s}
\end{split}
\end{equation}
for all $t\in(0,T]$ and a constant $C>0$ independent of $t>0$. The right-hand side of this inequality is bounded uniformly in $t>0$ and independent of $T>0$ because of estimate \eqref{mass-ecoli}. 
Moreover, if the solution is global-in-time, it converges to zero
by  Lemma \ref{serre} below, since $\lim_{t\to\infty}\|u(t)\|_1=0$ by Theorem \ref{conv_mass}.
\end{proof}

Next, we show the boundedness of the $L^2$-norm of $u$ using  energy estimates. 

\begin{lemm}\label{L2-bound}
Let the assumptions of Theorem \ref{th_inft} hold true. Moreover, let $(u,c,n,w)$ be the non-negative local-in-time solution to problem \eqref{ecoli1}--\eqref{ecoli-ini} constructed in Proposition~\ref{thm:local}. Then, there exists a numeber $C>0$ independent of $T_{\mathrm{max}}$ such that
$
\|u(t)\|_2\leq C
$
for all $t\in[0,T_{\mathrm{max}})$.
\end{lemm}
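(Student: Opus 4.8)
The plan is to run a standard $L^2$ energy estimate on equation \eqref{ecoli1} and to close it into a differential inequality for $y(t):=\|u(t)\|_2^2$ that forces $y$ to remain bounded. Everything is carried out on the maximal existence interval $[0,T_{\mathrm{max}})$, using only quantities already controlled there independently of $T_{\mathrm{max}}$: the mass bound $\|u(t)\|_1\le\|u_0\|_1+\gamma^{-1}\|n_0\|_1$ from \eqref{mass-ecoli}, the pointwise bound $0\le n\le\|n_0\|_\infty$ from \eqref{n:est0}, and the time-uniform estimate $\|c_x(t)\|_2\le C$ from Lemma~\ref{cx_to_0}. Since $u\in C^{2,1}(\bar\Omega\times(0,T_{\mathrm{max}}))$ by Proposition~\ref{thm:local}, the computations below are legitimate on $(0,T_{\mathrm{max}})$ with continuity up to $t=0$ (if one wants to be careful, work on $[\tau,T_{\mathrm{max}})$ and let $\tau\to0^+$).

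First I would multiply \eqref{ecoli1} by $u$, integrate over $\Omega$, and integrate by parts using the Neumann conditions \eqref{bound_cond_ecoli}; the boundary term from the chemotactic flux vanishes because $c_x=0$ on $\partial\Omega$. This gives
\begin{equation*}
\frac12\,y'(t)=-\|u_x(t)\|_2^2+\int_\Omega uu_x\,\chi'(c)\,c_x\dx+\int_\Omega g(u)\,n\,u^2\dx-\int_\Omega b(n)\,u^2\dx .
\end{equation*}
The last integral is nonnegative and is simply discarded. The growth integral is harmless: since $g\le G_0$ and $n\le\|n_0\|_\infty$ by Assumptions~\ref{ass}(i) and \eqref{n:est0}, it is bounded by $G_0\|n_0\|_\infty\,y(t)$, and then I would use the one‑dimensional Gagliardo--Nirenberg inequality $\|u\|_2^2\le C\|u_x\|_2^{2/3}\|u\|_1^{4/3}+C\|u\|_1^2$ together with \eqref{mass-ecoli} and Young's inequality to absorb it into a small multiple of $\|u_x\|_2^2$ plus a constant.

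The hard part will be the chemotactic term $\int_\Omega uu_x\,\chi'(c)\,c_x\dx$. Here I would use $|\chi'|\le\|\chi'\|_\infty$ (Assumptions~\ref{ass}(iii)) and estimate it by $\|\chi'\|_\infty\|u\|_\infty\|u_x\|_2\|c_x\|_2$; Lemma~\ref{cx_to_0} makes $\|c_x\|_2$ uniformly bounded, so it remains to control $\|u\|_\infty\|u_x\|_2$. In one space dimension the embedding $W^{1,2}(\Omega)\hookrightarrow L^\infty(\Omega)$, in the interpolated form $\|u\|_\infty\le C\|u_x\|_2^{2/3}\|u\|_1^{1/3}+C\|u\|_1$, combined once more with \eqref{mass-ecoli}, bounds $\|u\|_\infty$ by $C\|u_x\|_2^{2/3}+C$; plugging this in and applying Young's inequality with a small parameter reduces the chemotactic term to $\delta\|u_x\|_2^2+C_\delta$ for any $\delta>0$. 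I expect this to be the only delicate point: making the full dissipation $\|u_x\|_2^2$ dominate the chemotactic contribution is exactly what requires the one‑dimensional Sobolev embedding and the uniform $L^2$‑bound on $c_x$, and it is the reason the argument is dimension‑one specific.

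Putting the three estimates together and choosing the Young parameters small enough, I would obtain $y'(t)\le-\|u_x(t)\|_2^2+C$ with $C$ independent of $T_{\mathrm{max}}$. To conclude I would use the Poincaré inequality and the bound on $\|u\|_1$ once more in the form $\|u_x\|_2^2\ge c\,y-C$, which yields
\begin{equation*}
y'(t)\le-c\,y(t)+C .
\end{equation*}
Gronwall's lemma then gives $y(t)\le\max\{\|u_0\|_2^2,\,C/c\}$ for all $t\in[0,T_{\mathrm{max}})$, finite because $u_0\in C(\bar\Omega)$, which is the asserted uniform bound.
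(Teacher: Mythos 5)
Your proposal is correct and follows essentially the same route as the paper: the same $L^2$ energy identity, the growth term absorbed via the one-dimensional Gagliardo--Nirenberg inequality and the mass bound \eqref{mass-ecoli}, the chemotactic term controlled through Lemma~\ref{cx_to_0}, and a final absorption into the dissipation. The only (cosmetic) difference is the H\"older splitting of the chemotactic term — you use $\|u\|_\infty\|u_x\|_2\|c_x\|_2$ with the $L^\infty$ embedding, while the paper first applies Cauchy's inequality and then bounds $\int_\Omega u^2(c_x)^2\dx\le\|u\|_4^2\|c_x\|_4^2$ — both of which close by Young's inequality in the same way.
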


\begin{proof}
After multiplying equation \eqref{ecoli1} by $u$ and integrating over $\Omega$ we obtain
\begin{equation*}
\frac{1}{2}\frac{d}{\dy{t}}\int_\Omega u^2\dx+\int_\Omega (u_x)^2\dx+\int_\Omega b(n)u^2\dx=\int_\Omega g(u)nu^2\dx+\int_\Omega uc_x\chi^\prime(c)u_x\dx.
\end{equation*}
Thus, by the Cauchy inequality and  Assumptions \ref{ass}, we get
\begin{equation}\label{l1-e1}
\begin{split}
\frac{1}{2}\frac{d}{\dy{t}}\int_\Omega u^2\dx&+\frac{1}{2}\int_\Omega (u_x)^2\dx+b(\|n_0\|_\infty)\int_\Omega u^2\dx\\
&\leq G_0\|n_0\|_\infty\int_\Omega u^2\dx+ \frac{\|\chi^\prime\|_{L^\infty(\R)}^2}{2}\int_\Omega u^2(c_x)^2\dx,
\end{split}
\end{equation}
where constants $b(\|n_0\|_\infty)=\inf_{n}b(n)>0$ and $G_0=\sup_{u}g(u)>0$ are finite.
To deal with the last term on the right-hand side of \eqref{l1-e1} we use estimate \eqref{mass-ecoli} and Lemma \ref{cx_to_0} combined with the H\" older, Sobolev, 
and the $\varepsilon$-Cauchy inequalities in the following way
\begin{align*}
\int_\Omega u^2(c_x)^2\dx\leq\|u\|_4^2\|c_x\|_4^2\leq C\|u\|_{W^{1,2}}\|u\|_1\|c_x\|_4^2\leq \varepsilon\|u\|_{W^{1,2}}^2+C(\varepsilon),
\end{align*}
where the quantity $C(\varepsilon)=C(\varepsilon,\|u(t)\|_1,\|c_x(t)\|_4)$ is uniformly bounded in $t$ by inequality \eqref{mass-ecoli} and Lemma \ref{cx_to_0}.
Moreover, by the Sobolev inequality and the Young inequality, 
\begin{equation}\label{l1-e4}
\int_\Omega u^2\dx\leq C\|u\|_{W^{1,2}}^{2/3}\|u\|_1^{4/3}\leq \varepsilon\|u\|_{W^{1,2}}^2+C_\varepsilon\|u\|_1^2.
\end{equation}
Therefore, for every $\varepsilon>0$ there is a constant $C(\varepsilon)>0$ such that
\begin{equation}\label{l1-e2}
\frac{1}{2}\frac{d}{\dy{t}}\int_\Omega u^2\dx+\frac{1}{2}\int_\Omega (u_x)^2\dx+b(\|n_0\|_\infty)\int_\Omega u^2\dx\leq \varepsilon\|u\|_{W^{1,2}}^2+C(\varepsilon).
\end{equation}
The term on the right-hand side of \eqref{l1-e2} containing small $\varepsilon>0$ can be absorbed by the corresponding two terms on the left-hand side. Thus, we obtain the following differential inequality
\begin{align*}
\frac{d}{\dy{t}}\int_\Omega u^2\dx+C\|u\|^2_{W^{1,2}}\leq C,
\end{align*}
with a constant $C>0$ which, in particular, implies that $\|u(t)\|_2$ has to be bounded uniformly in $t$.
\end{proof}

The reminder of this section is devoted to the proof of Theorem \ref{th_inft} on the large time behavior of solutions to problem \eqref{ecoli1}--\eqref{ecoli-ini} in a one dimensional domain.

\begin{proof}[Proof of Theorem \ref{th_inft}]
Local-in-time solutions constructed in Theorem  \ref{thm:local} can be extended to all $t>0$ due to 
relations \eqref{BlowUpNorm} and 
{\it a priori} estimates which will be obtained below in the study of their large time behavior. We skip this standard reasoning and we proceed directly to estimates of solutions for large values of $t>0$.

{\it Step 1: $\lim_{t\to\infty}\|u(t)\|_\infty=0$.}
We apply the Duhamel formula \eqref{duh1} in the following way
\begin{equation}\label{u-v}
\begin{split}
\Big{\|}u(t)-{\rm e}^{\Delta t}u_0&- \int_0^t{\rm e}^{\Delta(t-s)}u(s)(g(u)n-b(n))(s)\dy{s}\Big{\|}_\infty\\ &=\Big{\|}\int_0^t{\rm e}^{\Delta(t-s)}\partial_x\big(u(s)c_x(s)\chi^\prime(c)\big)\dy{s}\Big{\|}_\infty.
\end{split}
\end{equation}
Using the property of the heat semigroup from Lemma \ref{lem:LpLq} below, the H\" older inequality, and  Assumptions \ref{ass} on the function $\chi$ we estimate the right-hand side of equation \eqref{u-v} as follows
\begin{equation}\label{t1-e1}
\begin{split}
\Big\|\int_0^t{\rm e}^{\Delta(t-s)}&\partial_x\big(u(s)c_x(s)\chi^\prime(c)\big)\dy{s}\Big\|_\infty\\
&\leq C\|\chi^\prime\|_\infty\int_0^t\left(1+(t-s)^{-\frac{5}{6}}\right){\rm e}^{-\lambda_1(t-s)} \|u(s)c_x(s)\|_{3/2}\dy{s}\\
&\leq C\|\chi^\prime\|_\infty\int_0^t\left(1+(t-s)^{-\frac{5}{6}}\right){\rm e}^{-\lambda_1(t-s)}\|u(s)\|_2\|c_x(s)\|_6\dy{s}.
\end{split}
\end{equation}

Thus, by Lemma \ref{serre} below, the integral on the right-hand side of
inequality  \eqref{t1-e1} tends to zero because $\|u(t)\|_2$ is bounded by Lemma~\ref{L2-bound} and because $\|c_x(t)\|_6$ tends to zero which is proved in Lemma~\ref{cx_to_0}. Hence, coming back to identity \eqref{u-v}, we see that
\begin{align}\label{th_1d1}
\|u(t)-v(t)\|_\infty\rightarrow 0 \quad \text{as}\quad t\to\infty,
\end{align}
where $v(x,t)$ is a solution to the problem
\begin{align}
&v_t=v_{xx}+g(u)nu-b(n)u,\label{rd1}\\
&v(x,0)=u_0(x),\label{rd1-ini}
\end{align}
supplemented with the Neumann boundary conditions. We denote the nonlinear term on the right-hand side of \eqref{rd1} by $f\equiv g(u)nu-b(n)u$ and since $g$, $b$ and $n$ are bounded, there exist a constant $C>0$ such that
\begin{align*}
\|f(\cdot,t)\|_1\leq C\|u(t)\|_1\to 0\quad \text{as}\quad t\to\infty
\end{align*}
by Theorem \ref{conv_mass}. Hence, by Lemma \ref{lemm_rd} we obtain
\begin{align}\label{st1-1}
\Big{\|}v(t)-\frac{1}{|\Omega|}\int_\Omega v(t)\dx\Big{\|}_\infty\to 0\quad \text{as}\quad t\to\infty.
\end{align}
However, integrating equation \eqref{rd1} with respect to $x$ and comparing 
the resulting formula 
 with equation \eqref{int_eq1}, it is easy to see that $\int_\Omega u(t)\dx=\int_\Omega v(t)\dx$ for all $t>0$. Therefore, using \eqref{th_1d1} and \eqref{st1-1} we obtain the convergence
\begin{align*}
\Big{\|}u(t)-\frac{1}{|\Omega|}\int_\Omega u(t)\dx\Big{\|}_\infty\leq \|u(t)-v(t)\|_\infty+ \Big{\|}v(t)-\frac{1}{|\Omega|}\int_\Omega v(t)\dx\Big{\|}_\infty\to 0
\end{align*}
as $t\to\infty$  which, in virtue of Theorem \ref{conv_mass}, completes the proof that $\lim_{t\to\infty}\|u(t)\|_\infty=0$.

{\it Step 2: Exponential decay of $\int_\Omega u(t)\dx$.} Recall that the function $b(n(x,t))$ is bounded from below by $b(\|n_0\|_\infty)>0$ because $b$ is 
nonincreasing, {\it cf.}  Assumptions \ref{ass}. Hence, since $\|u(t)\|_\infty\to 0$ as $t\to\infty$ and since $g(0)=0$, there exist constants $T>0$ and $\mu>0$ such that for all $t\geq T$ and all $x\in\Omega$ we have
\begin{equation*}
\big{(}g(u)n-b(n)\big{)}(x,t)\leq -\mu.
\end{equation*}
Thus, using equation \eqref{int_eq1} we get the following differential inequality
\begin{equation*}
\frac{d}{\dy{t}}\int_\Omega u(t)\dx\leq -\mu\int_\Omega u(t)\dx,
\end{equation*}
which implies the exponential decay
\begin{align}\label{L1:dec}
\|u(t)\|_1\leq\|u_0\|_1{\rm e}^{-\mu t}\quad \text{for all}\quad t>0.
\end{align}

Now, we use this estimate to improve Lemma \ref{cx_to_0}.

{\it Step 3: Exponential decay of $\|c_x(t)\|_p$ for each $p\in[1, \infty)$.}
Using the exponential decay of $\|u(t)\|_1$ from inequality  \eqref{L1:dec} in estimate \eqref{cx:dec} and Lemma \ref{lem:LpLq}, we obtain
\begin{align*}
\|c_x\|_p\leq C{\rm e}^{-\beta t}\|c_{0,x}\|_p+ C\int_0^t\left(1+(t-s)^{-\frac{1}{2}\left(1-\frac{1}{p}\right)-\frac{1}{2}}\right){\rm e}^{-(\beta+\lambda_1)(t-s)}{\rm e}^{-\mu s}\dy{s},
\end{align*}
where the integral on the right-hand side decays exponentially by Lemma \ref{serre}.

{\it Step 4: Exponential decay of $\|c(t)\|_\infty$.}
Applying the Duhamel principle \eqref{duh2}, computing the $L^\infty$-norm, and using the heat semigroup estimate \eqref{G2} we have
\begin{align*}
\|c(t)\|_\infty\leq C{\rm e}^{-\beta t}\|c_0\|_\infty+ C\int_0^t\left(1+(t-s)^{-\frac{1}{2}}\right){\rm e}^{-\beta(t-s)}\|u(s)\|_1\dy{s}
\end{align*}
for all $t>0$ and a constant $C>0$ independent of $t>0$. Since $\|u(t)\|_1$ decays exponentially, see \eqref{L1:dec}, we complete the proof of this step by Lemma \ref{serre}, again.

{\it Step 5: Exponential decay of $\|u(t)\|_\infty$.}
Here, it suffices to repeat all the estimates from Step~1 using the exponential decay estimates of $\|c_x(t)\|_6$ established in Step 3 and the decay of $\|u(t)\|_1$ from Step~2.

{\it Step 6: Exponential convergence $\lim_{t\to\infty}\|n(t)-n_\infty\|_\infty=0$.}
By Theorem \ref{conv_mass}, the limit
\begin{align*}
\lim_{t\to\infty}\int_\Omega n(t)\dx\equiv \tilde{n}_\infty=\int_\Omega n_0\dx- \int_0^\infty\int_\Omega \gamma g(u)nu\dx\dy{s}
\end{align*}
exists and is non-negative. This is, in fact, exponential convergence, because by equation \eqref{int_eq3} and by Step 2 we have
\begin{equation*}
\left|\int_\Omega n(t)\dx-\tilde{n}_\infty\right| \leq\gamma\int_t^\infty\int_\Omega|g(u)nu|\dx\dy{s}
\leq\gamma G_0\|n_0\|_\infty\int_t^\infty \|u(s)\|_1\dy{s} \leq C{\rm e}^{-\mu t}.
\end{equation*}
Now, applying Lemma \ref{lemm_rd} with $f(x,t)=-\gamma g(u)nu$ to equation \eqref{ecoli3}, since $\|f(\cdot,t)\|_1\to 0$ exponentially as $t\to\infty$, we obtain
\begin{align*}
\Big{\|}n(t)-\frac{1}{|\Omega|}\int_\Omega n(t)\dx\Big{\|}_\infty\to 0\quad \text{exponentially as}\quad t\to\infty.
\end{align*}
Combining these two convergence results we complete the proof of Step 6 with $n_\infty=|\Omega|^{-1}\tilde{n}_\infty$.

{\it Step 7: $\|w(t)-w_\infty\|_\infty\to 0$ exponentially as $t\to\infty$.}
Here, we define
\begin{align}\label{w-inft}
w_\infty(x)=w_0(x)+\int_0^\infty b(n(x,t))u(x,t)\dy{t}.
\end{align}
Notice, that since $b$ is bounded and $\|u(t)\|_\infty$ decays exponentially, the right-hand side of \eqref{w-inft} belongs to $L^\infty(\Omega)$. Moreover, it is easy to see that
\begin{align*}
\|w(t)-w_\infty(x)\|_\infty&=\left\|\int_t^\infty b(n)u(x,s)\dy{s}\right\|_\infty \leq C\int_t^\infty\|u(s)\|_\infty\dy{s}\\
&\leq C\int_t^\infty {\rm e}^{-\mu s}\dy{s}\to 0
\end{align*}
exponentially as $t\to\infty$. This completes the proof of Step 7 and of Theorem~\ref{th_inft}.
\end{proof}

\section{Problem in higher dimensions}\label{sec:high}

\begin{proof}[Proof of Theorem \ref{lp_estim}]
As in the one dimensional case, we consider the unique non-negative local-in-time solution to problem \eqref{ecoli1}--\eqref{ecoli-ini} which is constructed in Proposition \ref{thm:local}. This solution can be continued to the global one due to estimates proved below (see also relations \eqref{BlowUpNorm}).

Our first goal is to obtain  estimates for  $L^p$-norms of $u(t)$  which are uniform in time. Here, we use the Duhamel formula \eqref{duh1} in the following way
\begin{equation}\label{th_lp_1}
\begin{split}
\|u(t)\|_p&\leq \Big{\|}{\rm e}^{\Delta t}u_0+ \int_0^t{\rm e}^{\Delta(t-s)}u(s)(g(u)n-b(n))(s)\dy{s}\Big{\|}_p\\
&+\Big{\|}\int_0^t {\rm e}^{\Delta(t-s)}\nabla\cdot\big(u(s)\nabla\chi(c(s))\big)\dy{s}\Big{\|}_p.
\end{split}
\end{equation}

{\it Step 1: Estimate of $\|u(t)\|_{p_0}$ for each $p_0\in\left(\frac{d}{2},\frac{d}{d-2}\right)$ and $d\in \{2,3\}$.}
 As in Step 1 of the proof of Theorem~\ref{th_inft}, the first term on the right-hand side of inequality \eqref{th_lp_1} with $p=p_0$ will be denoted by $\|v(t)\|_{p_0}$, where $v(x,t)$ is a solution to the auxiliary problem \eqref{rd}--\eqref{rd-ini} with $f=u(g(u)n-b(n))$ and $v_0=u_0$. Recall
\begin{align*}
\|f(t)\|_1\leq\left\|u(t)(g(u)n-b(n))(t)\right\|_1\leq C\|u(t)\|_1 \quad\text{for all}\quad t>0
\end{align*}
because $g$, $b$, and $n$ are bounded.
Hence, using Lemma \ref{lemm_rd} (note that $p_0<\frac{d}{d-2}$), inequality \eqref{mass-ecoli}, and the elementary estimate $\|u_0\|_1\leq C(\Omega)\|u_0\|_{p_0}$, we obtain
\begin{equation}\label{th_lp_2}
\left\|{\rm e}^{\Delta t}u_0+ \int_0^t{\rm e}^{\Delta(t-s)}u(s)(g(u)n-b(n))(s)\dy{s}\right\|_{p_0}\leq C(\|u_0\|_{p_0}+\|n_0\|_1),
\end{equation}
for some constant $C>0$ independent of $t>0$.
{Now, we deal with the second term on the right-hand side of inequality  \eqref{th_lp_1} with $p=p_0$. 
First, 
we use equation \eqref{duh2} and inequalities 
\eqref{G3}, \eqref{G5} to estimate
\begin{equation}\label{est_c}
\begin{split}
\|\nabla c(t)\|_{r}&\leq \left\|\nabla{\rm e}^{(\Delta-\beta)t} c_0\right\|_{r}+ \int_0^t\left\|\nabla {\rm e}^{(\Delta-\beta)(t-s)}u(s)\right\|_{r}\dy{s} \\
&\leq {\rm e}^{-\beta t}\|\nabla c_0\|_{r}+C\int_0^t 
\left(1+(t-s)^{-\frac{d}{2}\left(\frac1{p_0}-\frac{1}{r}\right)-\frac{1}{2}}\right){\rm e}^{-\lambda_1(t-s)}\|u(s)\|_{p_0}\dy{s},
\end{split}
\end{equation}
where  the exponent $r$ satisfies 
\begin{equation}\label{r1}
r\geq p_0\qquad \text{and}\qquad -\frac{d}{2}\left(\frac1{p_0}-\frac{1}{r}\right)-\frac{1}{2}>-1.
\end{equation}
Next, using the heat semigroup estimate from Lemma \ref{lem:LpLq}, the assumption $\chi^\prime\in L^\infty([0,\infty))$ and the H\" older inequality with $\frac{1}{q}=\frac{1}{p_0}+\frac{1}{r}$ we obtain
\begin{equation}\label{est_u}
\begin{split}
\Big\|\int_0^t {\rm e}^{\Delta(t-s)}&\nabla\cdot \big(u(s)\nabla\chi(c(s))\big)\dy{s}\Big\|_{p_0}\\ 
&\leq C\int_0^t\left(1+(t-s)^{-\frac{d}{2}(\frac{1}{q}-\frac{1}{p_0})- \frac{1}{2}}
\right)
{\rm e}^{-\lambda_1(t-s)} \|u\nabla\chi(c(s))\|_q\dy{s}\\
&\leq  C\|\chi^\prime\|_\infty\int_0^t\left(1+(t-s)^{-\frac{d}{2r}-\frac{1}{2}}\right) {\rm e}^{-\lambda_1(t-s)}\|u(s)\|_{p_0}\|\nabla c(s)\|_{2p_0}\dy{s},
\end{split}
\end{equation}
where we require 
\begin{equation}\label{r2}
\frac{1}{p_0}+\frac{1}{r}\le 1 \qquad\text{and} \qquad  -\frac{d}{2r}-\frac{1}{2}>-1.
\end{equation}
By elementary calculations, one can always find $r$ satisfying all conditions in \eqref{r1} and \eqref{r2} under the assumptions $p_0\in\left(\frac{d}{2},\frac{d}{d-2}\right)$ and $d\in \{2,3\}$.}

Now, we define function
\begin{align*}
f(t)\equiv\sup_{0\leq s\leq t}\|u(s)\|_{p_0}
\end{align*}
and 
 by inequality \eqref{est_c}, we have that
\begin{align}\label{est_c2}
\|\nabla c(t)\|_{2p_0}\leq \|\nabla c_0\|_{2p_0}+ Cf(t).
\end{align}

Finally, applying estimates \eqref{th_lp_2}, \eqref{est_u} and \eqref{est_c2} into \eqref{th_lp_1} we obtain
\begin{align*}
\|u(t)\|_{p_0}\leq C\big(\|u_0\|_{p_0}+\|n_0\|_1\big)+ Cf(t)\big(\|\nabla c_0\|_{2p_0}+ Cf(t)\big),
\end{align*}
which leads the following inequality
\begin{align}\label{inq_fbis}
f(t)\leq  C_1(\|u_0\|_{p_0}+\|n_0\|_1)+ C_2\|\nabla c_0\|_{2p_0}f(t)+C_3f^2(t)
\end{align}
for positive constants $C_1$, $C_2$ and $C_3$ independent of $t>0$ and of the solution.
Now, we prove that, for a sufficiently small initial datum, inequality \eqref{inq_fbis} implies that $f(t)$ has to be bounded function.

Indeed, denote $H(y)=C_3y^2+(B-1)y+A$, where $B=C_2\|\nabla c_0\|_{2p_0}$ and $A= C_1(\|u_0\|_{p_0}+\|n_0\|_1)$. It is easy to check that for $4AC_3<(B-1)^2$, the equation $H(y)=0$ has two roots, say $y_1$ and $y_2$. Moreover, for $H^\prime(0)=B-1<0$, those roots are both positive. Hence, since $f(t)$ is non-negative and continuous,
 if we assume that $f(0)=\|u_0\|_{p_0}\in (0,y_1)$ then $f(t)\in [0,y_1]$ for all $t>0$. Note here that $f(0)\leq A$ because we can choose $C_1\geq 1$ without loss of generality. Moreover, by a direct calculation, we have $A<y_1$. Hence, $f(0)\in(0,y_1)$, and this completes the proof of Step 1.

{\it Step 2: Estimate of $\sup_{t>0}\|u(t)\|_\infty$.}{
We come back to inequality \eqref{th_lp_1} with $p=\infty$. Note that $2\in(\frac{d}{2},\frac{d}{d-2})$ for $d\in\{2,3\}$. Hence, by Step 1, we have that $\sup_{t>0}\|u(t)\|_2<\infty$. Thus, we use Lemma \ref{lemm-rdq} with $p=\infty$ and $q=2$ to obtain the following estimate of the first term on the right-hand side of \eqref{th_lp_1}
\begin{align*}
\left\|{\rm e}^{\Delta t}u_0+ \int_0^t{\rm e}^{\Delta(t-s)}u(s)(g(u)n-b(n))(s)\dy{s}\right\|_{\infty}
\leq C\big(\|u_0\|_{p_0}+\|n_0\|_1+\sup_{t>0}\|u(t)\|_2\big).
\end{align*}

Now, we deal with the second term on the right-hand side of \eqref{th_lp_1}. First, we consider the case $d=2$. By Step 1, for each $p\in[1,\infty)$ there is a constant $C>0$ such that $\|u(t)\|_p\leq C$ for all $t>0$. Using relation \eqref{est_c2} we also have that  $\|\nabla c(t)\|_p\leq C$ for all $t>0$ and for each $p\in[1,\infty)$. Hence, by the heat semigroup estimate \eqref{G3} and the H\" older inequality, we obtain the inequalities
\begin{equation*}
\begin{split}
\left\|\int_0^t\nabla {\rm e}^{\Delta(t-s)}u(s)\nabla\chi(c(s))\dy{s}\right\|_\infty&\leq C\int_0^t\left(1+(t-s)^{-\frac{1}{3}- \frac{1}{2}}\right){\rm e}^{-\lambda_1(t-s)} \|u\nabla\chi(c(s))\|_3\dy{s}\\
&\leq  C\|\chi^\prime\|_\infty\int_0^t\left(1+(t-s)^{-\frac{5}{6}}\right) {\rm e}^{-\lambda_1(t-s)}\|u(s)\|_6\|\nabla c(s)\|_6\dy{s},
\end{split}
\end{equation*}
where the right-hand side is bounded uniformly in $t>0$.

Next, we consider the case $d=3$, where by Step 1, we have $\sup_{t>0}\|u(t)\|_p<\infty$ for each $p\in[1,3)$. Hence, using estimate \eqref{G3} and the H\" older inequality, {\it cf.} \eqref{est_c}, we get
\begin{align}\label{est_c3}
\|\nabla c(t)\|_q\leq {\rm e}^{-\beta t}\|\nabla c_0\|_q+C\int_0^t 
\left(1+(t-s)^{-\frac{3}{2}(\frac{1}{p}-\frac{1}{q})-\frac{1}{2}}\right){\rm e}^{-\lambda_1(t-s)}\|u(s)\|_p\dy{s}.
\end{align}
Note, that the function $(t-s)^{-\frac{3}{2}(\frac{1}{p}-\frac{1}{q})-\frac{1}{2}}$ is integrable at $s=t$ for $q<\frac{3p}{3-p}$. Hence, for each $q\in[1,\infty)$ there exists a constant $C>0$ such that $\|\nabla c(t)\|_q\leq C$ for all $t>0$. 

Now, we are in a position to estimate the second term on the right-hand side of inequality  \eqref{th_lp_1} with $p=\infty$, for $d=3$, and we use the same reasoning as in the case $d=2$. First, for every $p\in[1,6)$ we obtain
\begin{equation*}
\begin{split}
\left\|\int_0^t\nabla {\rm e}^{\Delta(t-s)}u(s)\nabla\chi(c(s))\dy{s}\right\|_p\leq C\int_0^t\left(1+(t-s)^{-\frac{3}{2}(\frac{1}{2}-\frac{1}{p})-\frac{1}{2}}\right) {\rm e}^{-\lambda_1(t-s)} \|u\nabla\chi(c(s))\|_2\dy{s}&\\
\leq  C\|\chi^\prime\|_\infty\int_0^t \left(1+(t-s)^{-\frac{3}{2}(\frac{1}{2}-\frac{1}{p})-\frac{1}{2}}\right) {\rm e}^{-\lambda_1(t-s)}\|u(s)\|_{5/2}\|\nabla c(s)\|_{10}\dy{s}&.
\end{split}
\end{equation*}
Since the function $(t-s)^{-\frac{3}{2}(\frac{1}{2}-\frac{1}{p})-\frac{1}{2}}$ is integrable at $s=t$ for each $p<6$, and since $\|u(s)\|_{5/2}$ and $\|\nabla c(s)\|_{10}$ are uniformly bounded in $s>0$, we have proved that for each $p\in[1,6)$ we have $\|u(t)\|_p$ is uniformly bounded for all $t>0$. 

Repeating these estimates for $p=\infty$, we obtain
\begin{equation*}
\begin{split}
\left\|\int_0^t\nabla {\rm e}^{\Delta(t-s)}u(s)\nabla\chi(c(s))\dy{s}\right\|_\infty\leq C\int_0^t\left(1+(t-s)^{-\frac{3}{2}\cdot\frac{1}{4}-\frac{1}{2}}\right) {\rm e}^{-\lambda_1(t-s)} \|u\nabla\chi(c(s))\|_4\dy{s}\\
\leq  C\|\chi^\prime\|_\infty\int_0^t \left(1+(t-s)^{-\frac{7}{8}}\right) {\rm e}^{-\lambda_1(t-s)}\|u(s)\|_{5}\|\nabla c(s)\|_{20}\dy{s},
\end{split}
\end{equation*}
where the right-hand side is uniformly bounded in $t>0$. This completes the proof of Step 2.}

{\it Step 3: Exponential convergence of $(u(t),c(t),n(t),w(t))$.}
First, we show that
\begin{equation}\label{lim:Lp}
 \lim_{t\to\infty}\|u(t)\|_p=0 \qquad  \text{for every} \quad  p\in[1,\infty).
 \end{equation}
  Here, it suffices to combine the standard interpolation inequality of $L^p$-norms
\begin{align}\label{interp}
\|u(t)\|_p\leq C\|u(t)\|_1^{1/p}\|u(t)\|_\infty^{1-1/p},
\end{align}
together with the relation  $\lim_{t\to\infty}\|u(t)\|_1=0$ proved in Theorem \ref{conv_mass} and with the estimate  $\sup_{t>0}\|u(t)\|_\infty<\infty$ by Step 2.

Using relation \eqref{lim:Lp}
 we may show immediately that $\lim_{t\to\infty} \|u(t)\|_\infty=0$ following the reasoning from Step 2 again. Next, we prove the exponential decay of $\|u(t)\|_1$ in the same way as in Step 2 of the proof of Theorem \ref{th_inft}. Therefore, using interpolation equation \eqref{interp} again, we get the exponential decay of $\|u(t)\|_p$ for every $p\in[1,\infty)$ as well. By this fact, one can follow the reasoning from Step 2 once again, to obtain that $\|u(t)\|_\infty\to 0$ exponentially as $t\to\infty$. Moreover, by equation \eqref{duh2} we immediately show the exponential decay of $\|c(t)\|_\infty$.

Finally, to obtain the exponential convergence of $n(t)$ and $w(t)$ 
towards a number $n_\infty$ and a bounded function $w_\infty$, 
it suffices to repeat arguments from Step 6 and 7 of the proof of Theorem~\ref{th_inft}.
\end{proof}

\section{Blow up of solutions }\label{sec:blow}

Now we prove the theorem on a blowup  of solutions to problem  \eqref{eq1-b}--\eqref{eq-ini-b}.

\begin{proof}[Proof of Theorem \ref{blow-up}]
Here, we adapt an analogous  proof of a blow up of solutions to the parabolic-elliptic model of chemotaxis from  the work by  Nagai \cite{N01}. 

For given numbers $r_1$ and $r_2$ satisfying $0 < r_1 < r_2<\text{dist}(q,\partial\Omega)$, we define the function $\phi\in C^1([0,\infty))\cap W^{2,\infty}((0,\infty))$ by the formula
\begin{align*}
\phi(r):=\left\{\begin{aligned}
&r^2 \qquad\ &\textrm{for}&\qquad &0&\leq r\leq r_1,\\
&a_1r^2+a_2r+a_3  &\textrm{for}& &r_1&\leq r\leq r_2,\\
&r_1r_2 \qquad\ &\textrm{for}&\qquad & & r> r_2,
\end{aligned}
\right.
\end{align*}
where
\begin{align*}
a_1=-\frac{r_1}{r_2-r_1},\quad a_2=\frac{2r_1r_2}{r_2-r_1},\quad a_3=-\frac{r_1^2r_2}{r_2-r_1}.
\end{align*}
Thus,  the function $\varphi(x)=\phi(|x|)$ satisfies $\varphi\in C^1(\R^2)\cap W^{2,\infty}(\R^2)$. Moreover, by direct computations, we obtain
\begin{align}
\Delta\varphi(x)=4\quad \text{for}\quad |x|\leq r_1\qquad\text{and}\qquad \Delta\varphi(x)\leq 2\quad\text{for}\quad |x|>r_1.\label{ass3_phi}
\end{align}

Now, we consider a non-negative solution $(u,c,n)$ of problem \eqref{eq1-b}--\eqref{eq-ini-b} on an interval $[0,T_{\text{max}})$ and define mass and the generalized moment for fixed $q\in \Omega$ by the formulas
\begin{align*}
M(t)=\int_\Omega u(x,t)\dx\quad \text{and}\quad I(t)=\int_\Omega u(x,t)\varphi(x-q)\dx.
\end{align*}
Integrating by parts and 
by relation \eqref{ass3_phi} it is clear that
\begin{align}\label{est_dif}
\int_\Omega u(x,t)\Delta\varphi(x-q)\dx\leq 4M(t).
\end{align}
Moreover, since the functions $b$, $g$ and $n$ are bounded and non-negative, we obtain the following estimate
\begin{align}\label{est_re}
\int_{\Omega} (g(u)n-b(u))(x,t)u(x,t)\varphi(x-q)\dx\leq C_3I(t),
\end{align}
where $C_3=G_0\|n_0\|_\infty$.

{
Next, we recall an estimate which is a straightforward adaptation of the result from \cite{N01}.
Let $q\in\Omega$, $0<r_1<r_2<\text{dist}(q,\partial\Omega)$ and $\varphi(x)=\phi(|x-q|)$ be defined as above. Then, for all $t\in(0,T_{\text{max}})$, we have the following estimate
\begin{equation}\label{nagai}
\int_\Omega u(x,t)\nabla\varphi(x-q)\cdot\nabla c(x,t)\dx\leq -\frac{\alpha}{2\pi}M(t)^2+ C_1M(t)I(t)+ C_2M(t)^{3/2}I(t)^{1/2}
\end{equation}
for some constants $C_1$, $C_2$ depending on $r_1$, $r_2$ and $\text{dist}(q,\partial\Omega)$, only.
For the proof of this inequality, it suffices to repeat calculations from \cite[Inequalities (3.2), (3.5), (3.7)-(3.9)]{N01}.

}

Thus, multiplying equation \eqref{eq1-b} by $\varphi(x-q)$, integrating over $\Omega$ and using estimates \eqref{est_dif}--\eqref{est_re} together with inequality  \ref{nagai} we obtain
\begin{align*}
\frac{d}{\dy{t}} I(t)\leq 4M(t)-\frac{\alpha\chi_0}{2\pi}M^2(t)+(C_1\chi_0M(t)+C_3)I(t)+ C_2\chi_0M(t)^{3/2}I(t)^{1/2}.
\end{align*}

Note that for all $s>0$ and $\varepsilon>0$ we have the inequality $s^{1/2}\leq \varepsilon+\frac{1}{4\varepsilon}s$. Hence, for fixed $\varepsilon>0$, which will be chosen later, we use inequality \eqref{mass-ecoli} to obtain
\begin{align}\label{bl-est1}
\frac{d}{\dy{t}} I(t)\leq 4M(t)+\varepsilon-\frac{\alpha\chi_0}{2\pi}M^2(t)+C_4I(t),
\end{align}
where
\begin{align}\label{C}
C_4=C_3+C_1\chi_0(\|u_0\|_1+\frac{1}{\gamma}\|n_0\|_1)+ \frac{C_2^2\chi_0^2(\|u_0\|_1+\frac{1}{\gamma}\|n_0\|_1)^3}{4\varepsilon}.
\end{align}
Estimate \eqref{bl-est1} immediately implies that
\begin{align}\label{est1-I}
\frac{d}{\dy{t}}\Big{(} I(t){\rm e}^{-C_4t}\Big{)}\leq \Big{(}4M(t)+\varepsilon-\frac{\alpha\chi_0}{2\pi}M^2(t)\Big{)}{\rm e}^{-C_4t}.
\end{align}
Next, integrating equation \eqref{eq1-b} over $\Omega$ and using the inequalities $0\leq g(u)n\leq C_3=G_0\|n_0\|_\infty$ and $0<b(t)\leq B_0$, we deduce that
\begin{align*}
\frac{d}{\dy{t}}M(t)\leq C_3 M(t)\qquad\text{and}\qquad  \frac{d}{\dy{t}}M(t)\geq -B_0 M(t),
\end{align*}
hence,
\begin{align}\label{est-M}
M(t)\leq M_0{\rm e}^{C_4t} \qquad\text{and}\qquad
M(t)\geq M_0{\rm e}^{-B_0t} \qquad\text{for all}\qquad t>0.
\end{align}
Substituting estimates \eqref{est-M} in \eqref{est1-I} we obtain the inequality
\begin{align*}
\frac{d}{\dy{t}}\Big{(} I(t){\rm e}^{-C_4t}\Big{)}\leq 4M_0+\varepsilon-\frac{\alpha\chi_0M^2_0}{2\pi}{\rm e}^{-(C_4+2B_0)t},
\end{align*}
which implies
\begin{align}\label{est_mom}
I(t){\rm e}^{-C_4t}\leq I(0)+ (4M_0+\varepsilon)t -\frac{\alpha\chi_0M_0^2}{2\pi(C_4+2B_0)}(1-{\rm e}^{-(C_4+2B_0)t}).
\end{align}

To complete the proof of the nonexistence of global-in-time solutions, it suffices to show that right-hand side of inequality \eqref{est_mom} is negative for some $t>0$. Hence, it suffices to study the function
$
f(t)=A+Bt-D(1-{\rm e}^{-kt}).
$
First, note that $f$ attains its minimum at a certain point if and only if $B<kD$, which is the case if the number $\frac{1}{2\pi}M_0(8\pi-\alpha\chi_0M_0)+\varepsilon$ is negative. Here, one can chose for instance $\varepsilon=\frac{1}{4\pi}M_0(\alpha\chi_0M_0-8\pi)$. Thus, for sufficiently small $f(0)=I(0)=A$ there exist $t>0$ such that $f(t)<0$.

Hence, under these assumptions, the function $I(t)$ becomes negative in a finite time, which is impossible due to positivity of $\int_\Omega u(x,t)\varphi(x)\dx$. This means that a solution $u(t)$ with sufficiently small initial generalized moment $I(0)$ and with the initial mass satisfying $M_0>{8\pi}/(\alpha\chi_0)$ cannot be continued for all $t>0$.
%
\end{proof}


\appendix
\section{Parabolic estimates}

First, we recall estimates on the heat semigroup $\{e^{t\Delta}\}_{t\geq 0}$ in a bounded domain $\Omega$ with the Neumann boundary condition.

\begin{lemm}\label{lem:LpLq}
Let  $\lambda_1>0$ denote the first nonzero eigenvalue of $-\Delta$ in a bounded domain $\Omega\subset \R^d$ under the Neumann boundary conditions. For all  $1 \le q \le p \le +\infty$, there exist constants $C=C_1(p,q,\Omega)$ such that
\begin{itemize}
\item []
\begin{align}\label{G1}
\norm{{e^{t\Delta} f}}{p} \le C\left(1+t^{- \frac{d}{2}\left( \frac{1}{q}-\frac{1}{p} \right)}\right)e^{-\lambda_1t}\norm{f}{q}
\end{align}
for all $f \in L^q (\Omega)$ satisfying $\int_\Omega f\dx=0$ and all $t>0$;
\item []
\begin{align}\label{G2}
\norm{{e^{t\Delta} f}}{p} \le C\big{(}1+ t^{- \frac{d}{2}\left( \frac{1}{q}-\frac{1}{p} \right)}\big{)}\norm{f}{q}
\end{align}
for all $f \in L^q (\Omega)$ and all $t>0$;
\item []
\begin{align}\label{G3}
\norm{{\nabla \left(e^{t\Delta} f\right)}}{p} \le C \left(1+t^{- \frac{d}{2}
 \left(\frac{1}{q}-\frac{1}{p}\right) - \frac{1}{2}}\right)e^{-{\lambda_1} t}\norm{f}{q}
\end{align}
for all $f \in L^q (\Omega)$ and all $t>0$;
\item []
\begin{align}\label{G4}
\norm{{e^{t\Delta} \nabla \cdot f}}{p} \le C \left(1+t^{- \frac{d}{2}
 \left(\frac{1}{q}-\frac{1}{p}\right) - \frac{1}{2}}\right)e^{-{\lambda_1} t}\norm{f}{q},
\end{align}
provided $q>1$, for all $f \in (W^{1,q} (\Omega))^d$ and all $t>0$;
\item []
\begin{align}\label{G5}
\norm{{\nabla e^{t\Delta} f}}{p} \le C e^{-{\lambda_1} t}\norm{\nabla f}{p},
\end{align}
provided $p\in [2,\infty)$, for all $f \in W^{1,p} (\Omega)$ and all $t>0$;
\end{itemize}
\end{lemm}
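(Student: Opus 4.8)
The plan is to deduce every bound from three standard facts about the Neumann Laplacian $-\Delta_N$ on the smooth bounded domain $\Omega$: (a) its spectrum is discrete, $0=\lambda_0<\lambda_1\le\lambda_2\le\cdots$, with the constants spanning the kernel; (b) $\{e^{t\Delta}\}_{t\ge0}$ is a Markovian semigroup --- positivity preserving, $e^{t\Delta}1=1$, mass conserving (so the condition $\int_\Omega f\dx=0$ is preserved), and a contraction on each $L^p(\Omega)$, $1\le p\le\infty$, by interpolation between $L^1$- and $L^\infty$-contractivity; and (c) the Neumann heat kernel $K_t(x,y)$ enjoys Gaussian upper bounds together with gradient Gaussian bounds, which give, for $0<t\le1$ and $1\le q\le p\le\infty$,
\[
\|e^{t\Delta}\|_{L^q(\Omega)\to L^p(\Omega)}\le C\,t^{-\frac d2\left(\frac1q-\frac1p\right)},\qquad
\|\nabla e^{t\Delta}\|_{L^q(\Omega)\to L^p(\Omega)}\le C\,t^{-\frac d2\left(\frac1q-\frac1p\right)-\frac12}.
\]
Granting these, the non-gradient estimates are quick. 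For \eqref{G2} one uses the first short-time bound when $0<t\le1$, and for $t\ge1$ writes $e^{t\Delta}=e^{(t-1)\Delta}e^{\Delta}$ and combines $L^p$-contractivity with the short-time bound at time $1$. For \eqref{G1} the spectral gap gives $\|e^{t\Delta}f\|_2\le e^{-\lambda_1t}\|f\|_2$ on mean-zero functions; for $t\ge2$ I would sandwich $e^{t\Delta}=e^{\Delta}e^{(t-2)\Delta}e^{\Delta}$, applying short-time smoothing bounds (or plain embeddings) $L^q\to L^2$ and $L^2\to L^p$ on the two outer factors and the $L^2$ decay on the middle one (mean-zero being preserved), and for $0<t\le2$ simply absorb $e^{-\lambda_1t}\ge e^{-2\lambda_1}$ into the constant of \eqref{G2}.

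For the gradient estimates I would split once more. \eqref{G3}: for $0<t\le1$ the short-time gradient bound already gives $\|\nabla e^{t\Delta}f\|_p\le Ct^{-\frac d2(\frac1q-\frac1p)-\frac12}\|f\|_q$, into which the harmless factor $e^{-\lambda_1t}\ge e^{-\lambda_1}$ may be inserted; for $t\ge1$ write $\nabla e^{t\Delta}f=\nabla e^{\Delta}\big(e^{(t-1)\Delta}(f-\bar f)\big)$ --- the mean $\bar f=|\Omega|^{-1}\int_\Omega f\dx$ being annihilated by $\nabla e^{\Delta}$ --- estimate $\|e^{(t-1)\Delta}(f-\bar f)\|_q\le Ce^{-\lambda_1(t-1)}\|f\|_q$ by \eqref{G1}, and finish with $\|\nabla e^{\Delta}\|_{L^q\to L^p}\le C$. \eqref{G4} is the counterpart in which the divergence is transferred onto the kernel, $(e^{t\Delta}\nabla\cdot f)(x)=-\int_\Omega\nabla_yK_t(x,y)\cdot f(y)\dy{y}$ up to a boundary term handled by the usual density argument for $q>1$ (and absent for the no-flux vector fields arising in the application), after which one repeats the proof of \eqref{G3}; alternatively it follows from \eqref{G3} by duality, $e^{t\Delta}$ being self-adjoint and $\nabla\cdot$ the formal negative adjoint of $\nabla$ in $L^2(\Omega)$. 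Finally \eqref{G5}: for $p=2$ the identity $\frac d{dt}\|\nabla e^{t\Delta}f\|_2^2=-2\|\Delta e^{t\Delta}f\|_2^2$ together with $\|\Delta_N g\|_2^2\ge\lambda_1\|\nabla g\|_2^2$ for $g$ in the domain of $-\Delta_N$ (read off from the eigenfunction expansion) gives the sharp rate $e^{-\lambda_1t}$ by Gronwall; for $p\in(2,\infty)$ I would write $\nabla e^{t\Delta}f=\big(\nabla(-\Delta_N)^{-1/2}\big)\,e^{t\Delta}\,(-\Delta_N)^{1/2}(f-\bar f)$, bound the outer Riesz transform on $L^p(\Omega)$, use the $L^p$-decay on mean-zero functions from \eqref{G1}, and bound $\|(-\Delta_N)^{1/2}(f-\bar f)\|_p\le C\|\nabla f\|_p$ by the reverse Riesz inequality.

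\textbf{Main obstacle.} The bookkeeping of exponents and the semigroup splittings are elementary; the genuine input --- and the step I expect to require real work --- is justifying the $L^p$-theoretic facts on the curved domain $\Omega$ for every admissible exponent: the (gradient) Gaussian bounds for the Neumann heat kernel, the treatment of the boundary term in \eqref{G4}, and the $L^p(\Omega)$-boundedness of the Riesz transform $\nabla(-\Delta_N)^{-1/2}$ and of its reverse needed for \eqref{G5} when $p\ne2$, the point being that the gradient of a solution does not satisfy the no-flux condition, so the clean $L^2$ energy argument does not carry over verbatim. All of this is classical for smooth bounded domains; see the monograph \cite{yagi} and \cite{HW,BBTW} and the references therein.
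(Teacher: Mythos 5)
The paper does not actually prove Lemma \ref{lem:LpLq}: immediately after the statement it declares inequalities \eqref{G1}--\eqref{G5} to be well known for analytic semigroups generated by elliptic operators and refers to \cite[Lemma 3 on p.~25]{rothe}, to the abstract theory in \cite{A83}, and to the refined versions proved in \cite{W2,Cao}. There is therefore no in-paper argument to compare yours against; what can be said is that your outline is a correct reconstruction of the standard proofs in those references. Your scheme --- short-time $L^q$--$L^p$ smoothing from Gaussian and gradient-Gaussian bounds for the Neumann heat kernel, the spectral gap on mean-zero functions for the exponential factor, and splittings such as $e^{t\Delta}=e^{\Delta}e^{(t-2)\Delta}e^{\Delta}$ (respectively $\nabla e^{t\Delta}f=\nabla e^{\Delta}\big(e^{(t-1)\Delta}(f-\bar f)\big)$) to merge the two regimes --- is exactly how \eqref{G1}--\eqref{G3} are obtained in \cite{W2}, and the exponent bookkeeping in your reductions checks out. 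Two points rightly receive your attention. First, for \eqref{G4} the operator $e^{t\Delta}\nabla\cdot$ has to be \emph{defined}: the identity $(e^{t\Delta}\nabla\cdot f)(x)=-\int_\Omega\nabla_yK_t(x,y)\cdot f(y)\dy{y}$ is exact only for tangential fields with $f\cdot\nu=0$ on $\partial\Omega$, and the general case is the unique continuous extension from that dense class (this is how the analogue is stated and proved in \cite{W2}); your duality alternative via \eqref{G3}, with $\nabla\cdot$ the negative formal adjoint of $\nabla$, is the cleanest route and suffices for this paper, where the relevant field $u\nabla\chi(c)$ is tangential. Second, for \eqref{G5} with $p\in(2,\infty)$ your factorization through $\nabla(-\Delta_N)^{-1/2}$ is legitimate but imports the $L^p$-boundedness of the Neumann Riesz transform and its reverse on a bounded domain, which is heavier (though classical) machinery than the cited sources invoke, where the estimate is derived by more direct semigroup/energy arguments; either way the statement holds for smooth bounded $\Omega$. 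Neither point is a gap, so I regard your proposal as a correct, self-contained substitute for the proof the paper delegates to the literature.
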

{
Inequalities \eqref{G1}--\eqref{G5} are well-known in a general case of an analytic semigroup of bounded operators in $L^p(\Omega)$ 
generated by a elliptic partial differential 
operator. 
Some versions of them can be found in the monograph by 
\cite[Lemma 3 on p.~25]{rothe} and in
the abstract theory developed  in \cite{A83}.
Here, we quote refined versions of these estimates proved in \cite{W2, Cao}.


Next, we recall a technical lemma which is used systematically in this work
and we skip its elementary proof, see {\it e.g.}~\cite[Lemma 1.2]{W2}.

\begin{lemm}\label{serre}
Let $k\in [0,1)$ and $M>0$.
For every $f\in L^\infty(0,\infty)$, we have 
$$\sup_{t>0}\int_0^t \left(1+(t-s)^{-k}\right)e^{-M(t-s)}f(s)\dy{s}
\leq C \sup_{t>0} \|f(t)\|_\infty.$$
If
 $\lim_{t\to\infty}f(t)=0$, then
$
\lim_{t\to\infty}\int_0^t \left(1+(t-s)^{-k}\right)e^{-M(t-s)}f(s)\dy{s}=0.
$
Moreover, the speed of decaying of this integral is exponential if the function $f(t)\to 0$ exponentially as $t\to\infty$.
\end{lemm}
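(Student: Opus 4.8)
The plan is to treat Lemma \ref{serre} as an elementary convolution estimate, proving the three assertions in turn but all from the single observation that, since $k\in[0,1)$ and $M>0$, the constant
\begin{equation*}
\mathcal I:=\int_0^\infty\left(1+\tau^{-k}\right)e^{-M\tau}\dy{\tau}
\end{equation*}
is finite: indeed $\tau^{-k}$ is integrable near $\tau=0$ because $k<1$, and the exponential weight forces integrability at infinity. Throughout I will write $\|f\|:=\sup_{s>0}|f(s)|$ and use the change of variables $\tau=t-s$.

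For the uniform bound I would simply replace $f(s)$ by $|f(s)|\le\|f\|$ inside the integral, pull $\|f\|$ out, substitute $\tau=t-s$, and estimate $\int_0^t(1+\tau^{-k})e^{-M\tau}\dy{\tau}\le\mathcal I$; this gives the first inequality with $C=\mathcal I$. For the convergence statement, fix $\varepsilon>0$ and choose $T_0$ so that $|f(s)|<\varepsilon$ for $s\ge T_0$. For $t>T_0$ I split the integral at $T_0$: on $[T_0,t]$ the change of variables and $|f|<\varepsilon$ give a bound $\varepsilon\mathcal I$, while on $[0,T_0]$ one uses $t-s\ge t-T_0$, so that $e^{-M(t-s)}\le e^{-M(t-T_0)}$ and $1+(t-s)^{-k}\le2$ once $t-T_0\ge1$, yielding a bound $2T_0\,\|f\|\,e^{-M(t-T_0)}\to0$ as $t\to\infty$. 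Hence $\limsup_{t\to\infty}$ of the full integral is at most $\mathcal I\varepsilon$, and letting $\varepsilon\downarrow0$ gives the limit $0$.

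For the exponential rate, suppose $|f(s)|\le Ae^{-as}$ with $a>0$, and split the integral at $t/2$. On $[0,t/2]$ we have $t-s\ge t/2$, hence $e^{-M(t-s)}\le e^{-Mt/2}$ and $1+(t-s)^{-k}\le2$ for $t\ge2$, so this piece is bounded by $2e^{-Mt/2}\int_0^\infty Ae^{-as}\dy{s}=(2A/a)e^{-Mt/2}$. On $[t/2,t]$ we have $s\ge t/2$, so $|f(s)|\le Ae^{-at/2}$, and after substituting $\tau=t-s$ the remaining factor integrates to at most $\mathcal I$; this piece is bounded by $A\mathcal I\,e^{-at/2}$. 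Altogether the integral is $O\!\left(e^{-\min(M,a)t/2}\right)$, i.e. it decays exponentially. The argument contains no genuine obstacle; the only point requiring a little care is arranging the split (at $T_0$, respectively at $t/2$) so that one part inherits the smallness/decay of $f$ and the other is absorbed by the weight $e^{-M(t-s)}$, while checking that the constant $\mathcal I$ is independent of $t$.
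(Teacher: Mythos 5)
Your proof is correct and is exactly the standard convolution argument; the paper itself omits the proof entirely, merely calling it elementary and pointing to \cite[Lemma 1.2]{W2}, so your write-up supplies precisely what is skipped (finiteness of $\mathcal I=\int_0^\infty(1+\tau^{-k})e^{-M\tau}\dy{\tau}$ for the uniform bound, a split at a fixed $T_0$ for the limit, and a split at $t/2$ for the exponential rate). The only cosmetic point worth noting is that the integrand should be read with $|f(s)|$ (or $f\geq 0$, as it is in every application in the paper, where $f$ is a norm), which you implicitly do when replacing $f(s)$ by $|f(s)|$.
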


}


The following result on the large time behavior of solutions to the nonhomogeneous heat equation seems to be known but we recall its proof for the completeness of the exposition, 
\begin{lemm}\label{lemm_rd}
Let $\Omega\subset\R^d$ be a bounded domain and let
\begin{align*}
 p\in[1,\infty]\quad \text{if}\quad d=1, \qquad
 p\in[1,\infty)\quad \text{if}\quad d=2, \qquad
 p\in\left[1,\frac{d}{d-2}\right)\quad \text{if}\quad d\geq 3.
\end{align*}
Assume that $v_0\in L^p(\Omega)$ and $f=f(x,t)\in L^\infty([0,\infty),L^1(\Omega))$  Then, the solution to the following initial value problem
\begin{alignat}{2}
&v_t=\Delta v+f \quad&&\text{for}\quad x\in\Omega,\ t>0,\label{rd}\\
&\frac{\partial v}{\partial \nu}=0, &&\text{for}\quad x\in\partial\Omega,\ t>0,\\
&v(x,0)=v_0(x) &&\text{for}\quad x\in\Omega\label{rd-ini}
\end{alignat}
satisfies
\begin{align}\label{l-rd-est}
\|v(t)\|_p\leq C\big(\|v_0\|_p+\|v(t)\|_1+\sup_{s>0}\|f(s)\|_1\big) \quad\text{for all}\quad t>0,
\end{align}
where a constant $C$ is independent of $t>0$.
Moreover, if $\|f(\cdot,t)\|_1\to 0$ as $t\to\infty$ then we have
\begin{align}\label{conv_int}
\left\|v(t)-\frac{1}{|\Omega|}\int_\Omega v(t)\dx\right\|_p\to 0\quad \text{as}\quad t\to\infty.
\end{align}
In addition, if  $\|f(\cdot,t)\|_1\to 0$ exponentially as $t\to\infty$, then the convergence in \eqref{conv_int} is exponential, as well.
\end{lemm}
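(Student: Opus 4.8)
The plan is to start from the Duhamel representation
\begin{equation*}
v(t) = e^{t\Delta}v_0 + \int_0^t e^{(t-s)\Delta}f(s)\dy{s},
\end{equation*}
and to decompose both $v(t)$ and $f(s)$ into their spatial averages and mean-zero remainders. Write $\bar v_0 = |\Omega|^{-1}\int_\Omega v_0\dx$ and $\bar f(s) = |\Omega|^{-1}\int_\Omega f(s)\dx$; the latter is, for each $s$, a constant function and hence is left invariant by $e^{t\Delta}$ under the Neumann boundary condition. Integrating equation \eqref{rd} over $\Omega$ shows that the constant $\bar v(t):=|\Omega|^{-1}\int_\Omega v(t)\dx$ equals $\bar v_0 + \int_0^t\bar f(s)\dy{s}$, so that $\|\bar v(t)\|_p = |\Omega|^{\frac1p - 1}\big|\int_\Omega v(t)\dx\big|\le |\Omega|^{\frac1p-1}\|v(t)\|_1$. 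This is precisely where the term $\|v(t)\|_1$ in \eqref{l-rd-est} comes from, and, as I explain in the last paragraph, it cannot be dispensed with.

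Next I would estimate the mean-zero part $v(t)-\bar v(t) = e^{t\Delta}(v_0-\bar v_0) + \int_0^t e^{(t-s)\Delta}\tilde f(s)\dy{s}$, where $\tilde f(s) = f(s)-\bar f(s)$ has zero average and $\|\tilde f(s)\|_1\le 2\|f(s)\|_1$. For the first summand I would apply \eqref{G1} with $q=p$, which carries no time singularity and gives $\|e^{t\Delta}(v_0-\bar v_0)\|_p\le C\,e^{-\lambda_1 t}\|v_0\|_p$, using the elementary bound $\|\bar v_0\|_p\le\|v_0\|_p$ that follows from H\"older's inequality on the bounded domain $\Omega$. For the Duhamel term I would apply \eqref{G1} with $q=1$ to obtain
\begin{equation*}
\Big\|\int_0^t e^{(t-s)\Delta}\tilde f(s)\dy{s}\Big\|_p\le C\int_0^t\Big(1+(t-s)^{-\frac d2\left(1-\frac1p\right)}\Big)e^{-\lambda_1(t-s)}\|f(s)\|_1\dy{s}.
\end{equation*}
The restriction on $p$ enters here decisively: the exponent $\frac d2(1-\frac1p)$ is $<1$ exactly in the stated ranges (for $d\ge3$ this is the condition $p<\frac{d}{d-2}$), so the convolution kernel $(1+(t-s)^{-\frac d2(1-\frac1p)})e^{-\lambda_1(t-s)}$ is integrable uniformly in $t$ and Lemma \ref{serre} bounds the right-hand side by $C\sup_{s>0}\|f(s)\|_1$. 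Summing the two contributions and adding $\|\bar v(t)\|_p$ yields \eqref{l-rd-est}.

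The convergence assertion \eqref{conv_int} follows from the very same splitting: $e^{t\Delta}(v_0-\bar v_0)\to0$ exponentially in $L^p$ by \eqref{G1}, while the Duhamel integral tends to $0$ by the second part of Lemma \ref{serre} as soon as $\|f(\cdot,t)\|_1\to0$, and decays exponentially when $\|f(\cdot,t)\|_1$ does, by the last assertion of that lemma. Hence $\|v(t)-\bar v(t)\|_p\to0$ with the claimed rate, which is what \eqref{conv_int} asserts.

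The only genuinely delicate point is the bookkeeping of the spatial average: under the mere hypothesis $f\in L^\infty((0,\infty);L^1(\Omega))$ the quantity $\int_0^t\!\int_\Omega f(s)\dx\dy{s}$ need not stay bounded, so the mean of $v(t)$ cannot be controlled by the data alone and the term $\|v(t)\|_1$ is unavoidable on the right-hand side of \eqref{l-rd-est}; subtracting the mean is exactly what produces the decaying factor $e^{-\lambda_1 t}$ in \eqref{G1} and makes the singular kernel integrable in the regime of $p$ allowed by the hypotheses. Everything else reduces to routine applications of the semigroup bounds in Lemma \ref{lem:LpLq} and of Lemma \ref{serre}.
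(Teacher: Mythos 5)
Your proposal is correct and follows essentially the same route as the paper: subtract the spatial mean, write the Duhamel formula for the mean-zero part, apply estimate \eqref{G1} with $q=1$ (the range of $p$ being exactly what makes the kernel $(t-s)^{-\frac d2(1-\frac1p)}$ integrable), invoke Lemma \ref{serre} for the boundedness, convergence and exponential rate, and recover $\|v(t)\|_p$ from $\|w(t)\|_p$ and $|\Omega|^{\frac{1-p}{p}}\|v(t)\|_1$. Your closing remark on why the term $\|v(t)\|_1$ cannot be removed is a correct observation that the paper leaves implicit.
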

\begin{proof}
The function
\begin{equation}\label{def_w}
w(x,t)=v(x,t)-\frac{1}{|\Omega|}\int_\Omega v(x,t)\dx
\end{equation}
is a solution to the following initial value problem
\begin{align*}
&w_t=\Delta w+f(x,t)-\frac{1}{|\Omega|}\int_\Omega f(x,t)\dx \quad\text{for}\quad x\in\Omega,\ t>0,\\
&w(x,0)=w_0(x)=v_0(x)-\frac{1}{|\Omega|}\int_\Omega v_0(x)\dx,
\end{align*}
supplemented with the Neumann boundary condition. We estimate the $L^p$-norm of $w$ using its Duhamel representation
\begin{align}\label{duh_app}
w(t)=e^{\Delta t}w_0+\int_0^te^{\Delta(t-s)}\left(f-\frac{1}{|\Omega|}\int_\Omega f\dx\right)\dy{s}.
\end{align}
Obviously, we have the inequality 
$
\left\|f(s)-\frac{1}{|\Omega|}\int_\Omega f(x,s)\dx\right\|_1\leq 2\|f(s)\|_1.
$
Thus, we may use estimate \eqref{G1} (note that $\int_\Omega w(x,t)\dx=0$ for all $t\geq 0$) in the following way
\begin{align}\label{lemm_rd1}
\|w(t)\|_p\leq Ce^{-\lambda_1 t}\|w_0\|_p+ C\int_0^t\left(1+(t-s)^{-\frac{d}{2}(1-\frac{1}{p})}\right)e^{-\lambda_1(t-s)}\|f(s)\|_1\dy{s}.
\end{align}
Now, the inequality $-\frac{d}{2}(1-\frac{1}{p})>-1$ holds true due to the assumption on $p$.
Moreover, notice that by the definition of $w$ in \eqref{def_w}, we have the following elementary inequalities
\begin{align}
\|v(t)\|_p&\leq \|w(t)\|_p+|\Omega|^\frac{1-p}{p}\|v(t)\|_1\label{lemm_rd2}\\
\|w_0\|_p&\leq \|v_0\|_p+|\Omega|^\frac{1-p}{p}\|v_0\|_1\leq C\|v_0\|_p.\label{lemm_rd3}
\end{align}
Thus, applying estimates \eqref{lemm_rd2}--\eqref{lemm_rd3} in inequality \eqref{lemm_rd1} we obtain bound \eqref{l-rd-est} because 
$\sup_{t>0}\int_0^t\left(1+(t-s)^{-\frac{d}{2}(1-\frac{1}{p})}\right)e^{-\lambda_1(t-s)}\dy{s}<\infty$.
To show  convergence   \eqref{conv_int}, we apply Lemma \ref{serre} to inequality \eqref{lemm_rd1}.
\end{proof}

%

In this work, we need also another version of estimates from  Lemma \ref{lemm_rd}.

\begin{lemm}\label{lemm-rdq}
Let $\Omega\subset\R^d$ be a bounded domain. Fix $p\in[1,\infty]$. Assume that $v_0\in L^p(\Omega)$ and $f\in L^\infty((0,\infty), L^q(\Omega))$ for some $\frac{dp}{2p+d}\leq q\leq p$. Then there exist a constant $C>0$ such that the solution of problem \eqref{rd}--\eqref{rd-ini} satisfies
\begin{align*}
\|v(t)\|_p\leq C\big(\|v_0\|_p+\|v(t)\|_1+\sup_{t>0}\|f(s)\|_1+\sup_{t>0}\|f(s)\|_q\big)
\end{align*}
for all $t>0$. Moreover, if $\|f(\cdot,t)\|_1\to 0$ and $\|f(\cdot,t)\|_q\to 0$ as $t\to\infty$ then we have
\begin{align}\label{conv_int2}
\left\|v(t)-\frac{1}{|\Omega|}\int_\Omega v(t)\dx\right\|_p\to 0\quad \text{as}\quad t\to\infty.
\end{align}
In addition, if  $\|f(\cdot,t)\|_1\to 0$ and $\|f(\cdot,t)\|_q\to 0$ exponentially as $t\to\infty$, then the convergence in \eqref{conv_int2} is exponential as well.
\end{lemm}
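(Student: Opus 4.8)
The plan is to repeat, \emph{mutatis mutandis}, the argument that proves Lemma~\ref{lemm_rd}; the only new ingredient is that one keeps track of the $L^q$-norm of the source instead of only its $L^1$-norm, and this is exactly what frees the exponent $p$ to range over all of $[1,\infty]$ (in particular $p=\infty$, which is what Step~2 of the proof of Theorem~\ref{lp_estim} uses).

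First I would pass to the mean-free part
\[
w(x,t)=v(x,t)-\frac{1}{|\Omega|}\int_\Omega v(x,t)\dx ,
\]
which solves the heat equation with homogeneous Neumann boundary conditions, initial datum $w_0=v_0-\frac{1}{|\Omega|}\int_\Omega v_0\dx$ and source $\tilde f=f-\frac{1}{|\Omega|}\int_\Omega f\dx$, and which satisfies $\int_\Omega w(x,t)\dx=0$ for all $t\ge 0$. Exactly as in the proof of Lemma~\ref{lemm_rd} one has $\|w_0\|_p\le C\|v_0\|_p$, $\|\tilde f(s)\|_q\le\|f(s)\|_q+|\Omega|^{\frac1q-1}\|f(s)\|_1\le C\big(\|f(s)\|_1+\|f(s)\|_q\big)$, and $\|v(t)\|_p\le\|w(t)\|_p+|\Omega|^{\frac{1-p}{p}}\|v(t)\|_1$, so the whole problem reduces to bounding $\|w(t)\|_p$.

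Next I would use the Duhamel representation $w(t)=e^{t\Delta}w_0+\int_0^t e^{(t-s)\Delta}\tilde f(s)\dy{s}$ and apply the smoothing estimate \eqref{G1} to each term — which is legitimate precisely because $w_0$ and each $\tilde f(s)$ have zero average — to obtain
\begin{align*}
\|w(t)\|_p\le C e^{-\lambda_1 t}\|v_0\|_p+C\int_0^t\Big(1+(t-s)^{-k}\Big)e^{-\lambda_1(t-s)}\big(\|f(s)\|_1+\|f(s)\|_q\big)\dy{s},
\end{align*}
where $k=\frac d2\big(\frac1q-\frac1p\big)$. The hypothesis $\frac{dp}{2p+d}\le q\le p$ says exactly that $k\in[0,1]$, and for $k<1$ (the borderline value $q=\frac{dp}{2p+d}$ is not needed in the applications) the kernel $\big(1+(t-s)^{-k}\big)e^{-\lambda_1(t-s)}$ is the one for which Lemma~\ref{serre} applies. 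Hence $\sup_{t>0}\|w(t)\|_p\le C\big(\|v_0\|_p+\sup_{s>0}\|f(s)\|_1+\sup_{s>0}\|f(s)\|_q\big)$, which together with the elementary inequality above gives the asserted uniform bound on $\|v(t)\|_p$.

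For the convergence statements, observe that $\|f(\cdot,s)\|_1\to0$ and $\|f(\cdot,s)\|_q\to0$ force $\|\tilde f(\cdot,s)\|_q\to0$; the convergence part of Lemma~\ref{serre}, applied to the Duhamel integral, then yields $\big\|v(t)-\frac{1}{|\Omega|}\int_\Omega v(t)\dx\big\|_p=\|w(t)\|_p\to0$, and the exponential part of Lemma~\ref{serre} upgrades this to exponential decay when the decay of $\|\tilde f(\cdot,s)\|_q$ is exponential. Everything thus reduces to Lemma~\ref{serre} applied to the function $s\mapsto\|\tilde f(\cdot,s)\|_q$, so there is no serious obstacle; the only point that needs a moment's attention is checking that $k=\frac d2\big(\frac1q-\frac1p\big)$ stays in $[0,1)$, which is exactly what the assumptions relating $p$, $q$ and $d$ are designed to guarantee.
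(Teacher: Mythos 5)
Your proposal is correct and follows exactly the paper's own route: subtract the spatial mean, write the Duhamel formula for the mean-free part, apply the smoothing estimate \eqref{G1} with singularity exponent $k=\frac d2\bigl(\frac1q-\frac1p\bigr)$ to the source measured in $L^q$ (plus its $L^1$ correction), and conclude via Lemma~\ref{serre}. Your side remark that the borderline case $q=\frac{dp}{2p+d}$ gives $k=1$, for which the kernel is not integrable, is a fair observation of a point the paper passes over silently, but since that endpoint is never used in the applications it does not affect the argument.
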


\begin{proof}
We proceed in the same way as in the proof of Lemma \ref{lemm_rd}. The only difference consists in writing inequality \eqref{lemm_rd1} in the following way
\begin{align*}
\|w(t)\|_p\leq Ce^{-\lambda_1 t}\|w_0\|_p+ C\int_0^t\left(1+(t-s)^{-\frac{d}{2}(\frac{1}{q}-\frac{1}{p})}\right)e^{-\lambda_1(t-s)} \Big{(}\|f(s)\|_q+|\Omega|^\frac{1-q}{q}\|f(s)\|_1\Big{)}\dy{s}.
\end{align*}
\end{proof}

\section*{Acknowledgments}
R.~Celi\'nski and G.~Karch were supported by the International Ph.D.
Projects Programme of Foundation for Polish Science operated within the
Innovative Economy Operational Programme 2007-2013 funded by European
Regional Development Fund (Ph.D. Programme: Mathematical Methods in Natural Sciences) and by
the Polish National Science Center grants  No.~2013/09/N/ST1/04316 and No.~2013/09/B/ST1/04412.
M.Mimura was supported by Grant-in-Aid for Exploratory Research No.~15K13462.  D. Hilhorst, M. Mimura and P. Roux acknowledge the support of the CNRS GDRI ReaDiNet.



\end{document}